\newtheorem{thm}{Theorem}[section]
\numberwithin{equation}{section}
\numberwithin{figure}{section}
\DeclareMathOperator{\sech}{sech}
\title{Data-driven structure-preserving model reduction for stochastic Hamiltonian systems}
\date{}
\author[1,2,3]{Tomasz M. Tyranowski\thanks{\texttt{tomasz.tyranowski@ipp.mpg.de}}}
\affil[1]{\small University of Twente, Department of Applied Mathematics \authorcr PO Box 217, 7500AE Enschede, The Netherlands}
\affil[2]{\small Max-Planck-Institut f\"ur Plasmaphysik \authorcr Boltzmannstra{\ss}e 2, 85748 Garching, Germany}
\affil[3]{\small Technische Universit\"{a}t M\"{u}nchen, Zentrum Mathematik \authorcr Boltzmannstra{\ss}e 3, 85748 Garching, Germany}
\begin{document}

\maketitle

\begin{abstract}
In this work we demonstrate that SVD-based model reduction techniques known for ordinary differential equations, such as the proper orthogonal decomposition, can be extended to stochastic differential equations in order to reduce the computational cost arising from both the high dimension of the considered stochastic system and the large number of independent Monte Carlo runs. We also extend the proper symplectic decomposition method to stochastic Hamiltonian systems, both with and without external forcing, and argue that preserving the underlying symplectic or variational structures results in more accurate and stable solutions that conserve energy better than when the non-geometric approach is used. We validate our proposed techniques with numerical experiments for a semi-discretization of the stochastic nonlinear Schr\"{o}dinger equation and the Kubo oscillator.
\end{abstract}

\section{Introduction}
\label{sec:intro}
The purpose of this work is twofold: to demonstrate that the conventional SVD-based model reduction methods for ordinary differential equations (ODEs) can be extended to stochastic differential equations (SDEs), and to show that in the case of stochastic Hamiltonian systems it is beneficial to maintain their symplectic (or variational) structure in the construction of the reduced spaces by extending the existing structure-preserving model reduction techniques for deterministic Hamiltonian systems. 

Model reduction methods have been introduced in order to reduce the computational cost of solving high-dimensional dynamical systems. The goal of these techniques is to construct lower-dimensional models which are less expensive to solve numerically, but still capture the dominant features of the dynamics of the original system (see \cite{Antoulas2001}, \cite{BennerGugercin2015}, \cite{BennerBook2017}, \cite{QuarteroniBook2015} and the references therein). The Proper Orthogonal Decomposition (POD), first introduced in \cite{Lumley1967}, is an SVD-based data-driven model reduction technique that employs an offline-online splitting. In the offline stage the available empirical data about the solutions of the full system are used to identify an optimal subspace of the full configuration space. The equations governing the evolution of the system are then projected to that subspace, and in the online stage such a reduced model is solved numerically at a lower computational cost. The POD method has proved very successful and has been used in many scientific and engineering problems, such as fluid dynamics (\cite{HolmesRowleyBook}, \cite{ItoRavindran1998}, \cite{ItoRavindran2001}, \cite{Kunisch2002}, \cite{Lassila2014}, \cite{Peterson1989}, \cite{RowleyMurray2004}), electric circuit analysis (\cite{ParriloMarsden1999}), or structural dynamics (\cite{Amabili2003}); see also \cite{Buchfink2019}, \cite{Chaturantabut2010}, \cite{Drohmann2012}, \cite{GlavaskiMarsden1998}, \cite{RathinamPetzold2003}.
 
Consider a general stochastic differential equation

\begin{equation}
\label{eq: Stochastic differential equation - general}
d_t u = a(u)\,dt + \sum_{\nu=1}^{m} b_\nu(u)\circ dW^\nu(t),
\end{equation}

\noindent
where $a:\mathbb{R}^n \longrightarrow \mathbb{R}^n$ is the drift function, $[b_1, \ldots, b_m]:\mathbb{R}^n\longrightarrow \mathbb{R}^{n \times m}$ is the diffusion matrix, $W(t)=(W^1(t),\ldots,W^m(t))$ is the standard $m$-dimensional Wiener process, and $\circ$ denotes Stratonovich integration. We use $d_t$ to denote the stochastic differential of stochastic processes (other than the Wiener process $W(t)$) to avoid confusion with the exterior derivative $d$ of differential forms. We will assume that the drift function and the diffusion matrix are sufficiently smooth and satisfy all the necessary conditions for the existence and uniqueness of solutions to \eqref{eq: Stochastic differential equation - general} (see \cite{ArnoldSDE}, \cite{IkedaWatanabe1989}, \cite{KloedenPlatenSDE}, \cite{Kunita1997}). Much like in the deterministic case, numerical simulations of \eqref{eq: Stochastic differential equation - general} become computationally expensive when the dimension $n$ of the stochastic process $u(t)$ is large. Such a situation may occur, for instance, when \eqref{eq: Stochastic differential equation - general} comes from a semi-discretization of a stochastic partial differential equation (see Section~\ref{sec: Stochastic Nonlinear Schrodinger Equation}). Also, in order to calculate the statistical properties of the stochastic process $u(t)$, such as the probability density function or the expected value, one typically needs to simulate \eqref{eq: Stochastic differential equation - general} for a very large number of sample paths. We will show that the computational cost of both these tasks can be alleviated by adapting the POD method to the stochastic setting. A number of model reduction methods have been applied to stochastic systems in various contexts. Examples include partial differential equations with random coefficients (\cite{BoyavalBris2010}, \cite{Boyaval2009}, \cite{Doostan2007}, \cite{Ghanem2007}), parametric closure models for the stochastic Burgers' equation (\cite{Lu2020}), or a variance reduction method for SDEs (\cite{BoyavalLelievre2010}). Stochastic reduced models were also used as noisy perturbations of deterministic reduced models in order to account for unresolved small-scale features (\cite{Bastine2018}, \cite{Chorin2015}, \cite{Lu2017}, \cite{Paskyabi2020}). However, to the best of our knowledge the adaptation of the POD method to SDEs driven by a Wiener process has been much less investigated. An application of the POD method which is similar in spirit to our approach appears in \cite{Iliescu2018} and \cite{XieBao2018}, but only in the specific context of solving the stochastic Burgers equation, empirical approximation of the nonlinear term is not addressed, and only low-order integration in time is used (see also \cite{Burkardt2007}).

The POD method can, in principle, be applied to a Hamiltonian system. There is, however, no guarantee that the reduced system will maintain the Hamiltonian structure, nor that it will be stable, which may result in a blow-up of its solutions (see \cite{Prajna2003}, \cite{RathinamPetzold2003}). A model reduction technique that retains the symplectic structure of Hamiltonian systems was introduced in \cite{PengMohseni2016}. In analogy to POD, this method is called the Proper Symplectic Decomposition (PSD). The PSD method has been proven to preserve the energy and stability of the system, and to yield better numerical solutions, especially when combined with symplectic integration in time. Therefore, the PSD method is better suited for model reduction of Hamiltonian systems than the classical POD approach, especially when long-time integration is required; see also \cite{AfkhamHesthaven2017}, \cite{AfkhamHesthaven2018}, \cite{AfkhamHesthaven2019}, \cite{Carlberg2015}, \cite{Chaturantabut2016}, \cite{Gugercin2012}, \cite{HesthavenReview2021}, \cite{Karasozen2018}, \cite{LallMarsden2003}, \cite{PengMohseniProceedings2016}, \cite{PengMohseniPreprint2016}, \cite{Polyuga2010}.

A stochastic Hamiltonian system is an SDE of the form

\begin{align}
\label{eq: Stochastic Hamiltonian system - general}
d_t q = \frac{\partial H}{\partial p}\,dt + \sum_{\nu=1}^{m} \frac{\partial h_\nu}{\partial p}\circ dW^\nu(t), \qquad\qquad d_t p = -\frac{\partial H}{\partial q}\,dt - \sum_{\nu=1}^{m} \frac{\partial h_\nu}{\partial q}\circ dW^\nu(t),
\end{align}

\noindent
where $H:\mathbb{R}^n \times \mathbb{R}^n\longrightarrow \mathbb{R}$ and $h_\nu:\mathbb{R}^n \times \mathbb{R}^n\longrightarrow \mathbb{R}$ for $\nu=1,\ldots,m$ are the Hamiltonian functions. Such systems can be used to model, e.g., mechanical systems with uncertainty, or error, assumed to arise from random forcing, limited precision of experimental measurements, or unresolved physical processes on which the Hamiltonian of the deterministic system might otherwise depend. Particular examples include modeling synchrotron oscillations of particles in particle storage rings (see \cite{DomeAccelerators}, \cite{SeesselbergParticleStorageRings}) and stochastic dynamics of the interactions of singular solutions of the EPDiff basic fluids equation (see \cite{HolmTyranowskiSolitons}, \cite{HolmTyranowskiVirasoro}). More examples are discussed in Section~\ref{sec: Numerical experiments}; see also \cite{HolmTyranowskiGalerkin}, \cite{LelievreStoltz2016}, \cite{Mao2007}, \cite{Nelson1988}, \cite{SanzSerna1999}, \cite{Shardlow2003}, \cite{Soize1994}, \cite{Talay2002}. Similar to their deterministic counterparts, stochastic Hamiltonian systems possess several important geometric features. In particular, their phase space flows (almost surely) preserve the canonical symplectic structure. We will argue that maintaining this property in model reduction is beneficial because the resulting reduced systems can then be integrated using stochastic symplectic methods (see \cite{Anmarkrud2017}, \cite{AntonWeak2014}, \cite{AntonGlobalError2013}, \cite{Anton2013}, \cite{BrehierCohen2021}, \cite{Burrage2012}, \cite{Burrage2014}, \cite{ChenCohen2020}, \cite{Cohen2014}, \cite{CohenVilmart2021}, \cite{Cordoni2022}, \cite{AntonHighOrder2014}, \cite{HolmTyranowskiSolitons}, \cite{HolmTyranowskiGalerkin}, \cite{HongSunBook2023}, \cite{Hong2015}, \cite{KrausTyranowski2019}, \cite{MaDing2012}, \cite{MaDing2015}, \cite{MilsteinRepin2001}, \cite{MilsteinRepin}, \cite{Misawa2010}, \cite{SunWang2016}, \cite{WangPHD}, \cite{Wang2014}, \cite{Wang2017}, \cite{Zhou2017}), and consequently more accurate numerical solutions can be obtained. This goal can be achieved by an appropriate adaptation of the PSD method in the stochastic setting. We are not aware of any previous studies on this topic.

\paragraph{Main content}
The main content of the remainder of this paper is, as follows. 
\begin{description}
\item
In Section~\ref{sec: Model reduction for SDEs} we briefly review the POD method and present how it can be applied to the general SDE \eqref{eq: Stochastic differential equation - general} in computations involving a single realization of the Wiener process.
\item
In Section~\ref{sec: Model reduction for stochastic Hamiltonian systems} we briefly review the PSD method and present how it can be applied to the stochastic Hamiltonian system \eqref{eq: Stochastic Hamiltonian system - general} in computations involving a single realization of the Wiener process. We also further extend the PSD method to stochastic forced Hamiltonian systems.
\item
In Section~\ref{sec: Reduction of the number of Monte Carlo runs} we discuss how the problem of calculating the statistical properties of solutions to \eqref{eq: Stochastic differential equation - general} or \eqref{eq: Stochastic Hamiltonian system - general} can be recast as the problems presented in Section~\ref{sec: Model reduction for SDEs} and Section~\ref{sec: Model reduction for stochastic Hamiltonian systems}, respectively.
\item
In Section~\ref{sec: Numerical experiments} we present the results of our numerical experiments for the stochastic Nonlinear Schr\"{o}dinger Equation and the Kubo oscillator, both with and without external forcing.
\item
Section~\ref{sec: Summary} contains the summary of our work.
\end{description}

\section{Model reduction for SDEs}
\label{sec: Model reduction for SDEs}

The Proper Orthogonal Decomposition is one of the standard model reduction techniques for ordinary differential equations (see \cite{Antoulas2001}, \cite{BennerGugercin2015}, \cite{BennerBook2017}, \cite{QuarteroniBook2015}). In this section we demonstrate how the POD method can be adapted in the context of stochastic differential equations.

\subsection{Proper Orthogonal Decomposition}
\label{sec: Proper Orthogonal Decomposition}

Suppose we would like to solve \eqref{eq: Stochastic differential equation - general} for a single realization of the Wiener process $W(t)$. If the dimension $n$ of the stochastic process $u(t)$ is a very high number, then the system \eqref{eq: Stochastic differential equation - general} becomes very expensive to solve numerically. The main idea of model reduction is to approximate such a high-dimensional stochastic dynamical system using a lower-dimensional one that can capture the dominant dynamic properties. Let $\Delta$ be an $n \times r$ matrix representing empirical data on the system~\eqref{eq: Stochastic differential equation - general}. For instance, $\Delta$ can be a collection of snapshots of a solution of this system for the given realization of the Wiener process, 

\begin{equation}
\label{eq: Empirical data for POD}
\Delta = [u(t_1) \, u(t_2) \, \ldots \, u(t_r)],
\end{equation}

\noindent
at times $t_1,\ldots,t_r$. These snapshots are calculated for a particular set of initial conditions or values of parameters that the system~\eqref{eq: Stochastic differential equation - general} depends on (see Section~\ref{sec: Numerical experiments}). A low-rank approximation of $\Delta$ can be done by performing the singular value decomposition (SVD) of $\Delta$ and truncating it after the first $k$ largest singular values, that is,

\begin{equation}
\label{eq: SVD of Delta}
\Delta = U \Sigma V^T \approx U_k \Sigma_k V_k^T,
\end{equation}

\noindent
where $\Sigma = \text{diag}(\sigma_1, \sigma_2, \ldots)$ is the diagonal matrix of the singular values, $U$ and $V$ are orthogonal matrices, $\Sigma_k$ is the diagonal matrix of the first $k$ largest singular values, and $U_k$ and $V_k$ are orthogonal matrices constructed by taking the first $k$ columns of $U$ and $V$, respectively. Let $\xi$ denote a vector in $\mathbb{R}^k$. Substituting $u=U_k\xi$ in \eqref{eq: Stochastic differential equation - general} yields a reduced SDE for $\xi(t)$ as

\begin{equation}
\label{eq: POD reduced stochastic differential equation}
d_t \xi = U^T_ka(U_k\xi)\,dt + \sum_{\nu=1}^{m} U^T_kb_\nu(U_k\xi)\circ dW^\nu(t).
\end{equation}

\noindent
The corresponding initial condition is calculated as $\xi_0 = U_k^T u_0$, where $u_0$ is an initial condition for~\eqref{eq: Stochastic differential equation - general}. If the singular values of $\Delta$ decay sufficiently fast, then one can obtain a good approximation of $\Delta$ for $k$ such that $k\ll n$. Equation~\eqref{eq: POD reduced stochastic differential equation} is then a low-dimensional approximation of \eqref{eq: Stochastic differential equation - general} and can be solved more efficiently. The approximate solution of \eqref{eq: Stochastic differential equation - general} is then reconstructed as $u(t)=U_k\xi(t)$. The process of constructing the matrix $U_k$ from the empirical/simulation data ensemble $\Delta$ is typically called the \emph{offline} stage of model reduction. Depending on the size of the data, this stage can be computationally expensive, but it is performed only once. Solving the low-dimensional system \eqref{eq: POD reduced stochastic differential equation} is usually referred to as the \emph{online} stage of model reduction, and is supposed to be faster and more efficient than solving the full system \eqref{eq: Stochastic differential equation - general}.

\subsection{Discrete Empirical Interpolation Method}
\label{sec: Discrete Empirical Interpolation Method}

If the drift $a$ or any of the diffusion terms $b_\nu$ is a complicated nonlinear function, then solving the reduced system \eqref{eq: POD reduced stochastic differential equation} may not bring any computational savings, because one usually needs to compute the state variable $u=U_k \xi$ in the original coordinate system, evaluate the nonlinear drift or diffusion terms, and then project back to the column space of $U_k$. A technique called the Discrete Empirical Interpolation Method (DEIM) has been developed for ODEs in order to reduce the complexity in evaluating the nonlinear terms (see \cite{Chaturantabut2010}, \cite{RathinamPetzold2003}). This technique can be readily adapted also in the stochastic setting. For completeness and for the benefit of the reader, below we briefly outline the main ideas of DEIM. The further details can be found in, e.g., \cite{Chaturantabut2010}, \cite{PengMohseni2016}. Let the drift function be expressed as

\begin{equation}
\label{eq: Drift function - decomposition}
a(u) = L u + a_N(u),
\end{equation}

\noindent
where $L$ is an $n \times n$ matrix, and $a_N$ represents the nonlinear part of $a$. First, suppose that $a_N(u)$ lies approximately in the range of an $n \times \bar k$ matrix $\Psi$. Similar to $U_k$ in \eqref{eq: SVD of Delta}, the matrix $\Psi$ can be found by performing the SVD of another empirical data ensemble, namely

\begin{equation}
\label{eq: Empirical data for DEIM}
\bar \Delta = [a_N(u(t_1)) \,\, a_N(u(t_2)) \,\, \ldots \,\, a_N(u(t_r))],
\end{equation}

\noindent
and truncating it after the first $\bar k$ largest singular values. Next, calculate only $\bar k$ components of the vector $a_N(u)$ with the preselected indices $\beta_1, \ldots \beta_{\bar k}$. Those $\bar k$ components of $a_N(u)$ can be conveniently denoted by the expression $P^T a_N(u)$, where the $n \times \bar k$ matrix $P$ is defined as

\begin{equation}
\label{eq: P matrix}
P = [\mathbbm{e}_{\beta_1}, \ldots, \mathbbm{e}_{\beta_{\bar k}}],
\end{equation}

\noindent
and $\mathbbm{e}_{\beta_i}$ denotes the $\beta_i$-th column of the identity matrix $\mathbb{I}_n$. Given $\Psi$, the suitable set of indices $\beta_1, \ldots \beta_{\bar k}$ can be inductively constructed using a greedy algorithm (see Algorithm 1 in \cite{Chaturantabut2010} for details). The DEIM approximation of the nonlinear term is then expressed as

\begin{equation}
\label{eq: DEIM approximation of f_N}
\bar a_N(u) = \Psi (P^T \Psi)^{-1} P^T a_N(u),
\end{equation}

\noindent
and the approximation of the drift term in \eqref{eq: POD reduced stochastic differential equation} becomes

\begin{equation}
\label{eq: DEIM approximation of the drift term}
U^T_ka(U_k\xi) \approx \bar L \xi + W g(\xi),
\end{equation}

\noindent
where

\begin{equation}
\label{eq: L bar, W, and g(xi) for DEIM}
\bar L = U^T_k L U_k, \qquad\qquad W = U^T_k \Psi (P^T \Psi)^{-1}, \qquad\qquad g(\xi) = P^T a_N(U_k \xi).
\end{equation}

\noindent
Note that the matrices $\bar L$ and $W$ are calculated only once at the offline stage, and at the online stage the function $g(\xi)$ evaluates only $\bar k$ components of $a_N(U_k \xi)$. A similar DEIM approximation can be applied to each of the diffusion terms $b_\nu(u)$, thus reducing the computational complexity of the reduced system \eqref{eq: POD reduced stochastic differential equation}.

\subsection{Time integration}
\label{sec: POD Time integration}

The SDEs \eqref{eq: Stochastic differential equation - general} and \eqref{eq: POD reduced stochastic differential equation} can be solved numerically using any general purpose stochastic numerical schemes (see \cite{KloedenPlatenSDE}, \cite{MilsteinBook} and the references therein). In this work we will focus our attention on several stochastic Runge-Kutta methods, namely the explicit stochastic Heun and $R2$ methods (see \cite{Burrage1996}, \cite{Burrage1998}, \cite{Burrage2000}, \cite{BurragePhDThesis}, \cite{KloedenPlatenSDE}), and the implicit stochastic midpoint method (see \cite{HolmTyranowskiGalerkin}, \cite{KrausTyranowski2019}, \cite{MaDing2015}, \cite{MilsteinRepin}). The stochastic mipoint method for \eqref{eq: Stochastic differential equation - general} takes the form

\begin{equation}
\label{eq: Stochastic midpoint method}
u_{i+1} = u_i + a\bigg( \frac{u_i+u_{i+1}}{2}\bigg) \Delta t + \sum_{\nu=1}^m b_\nu\bigg( \frac{u_i+u_{i+1}}{2}\bigg) \Delta W^\nu,
\end{equation}

\noindent
where $\Delta t$ denotes the time step and $\Delta W = (\Delta W^1, \ldots, \Delta W^m)$ are the increments of the Wiener process. All the mentioned methods are strongly convergent of order 1 for systems driven by a commutative noise, and of order 1/2 in the non-commutative case.

\section{Model reduction for stochastic Hamiltonian systems}
\label{sec: Model reduction for stochastic Hamiltonian systems}

The Proper Symplectic Decomposition is a model reduction technique that has been developed for deterministic Hamiltonian systems (see \cite{AfkhamHesthaven2017}, \cite{PengMohseni2016}). In this section we demonstrate how the PSD method can be adapted in the context of stochastic Hamiltonian systems.

\subsection{Proper Symplectic Decomposition}
\label{sec: Proper Symplectic Decomposition}

The stochastic Hamiltonian system \eqref{eq: Stochastic Hamiltonian system - general} can be equivalently written as

\begin{equation}
\label{eq: Stochastic Hamiltonian system in terms of J}
d_t u = \mathbb{J}_{2n} \nabla_u H(u) \, dt + \sum_{\nu=1}^m \mathbb{J}_{2n} \nabla_u h_\nu(u) \circ dW^\nu(t),
\end{equation}

\noindent 
where $u=(q,p)$ and $\mathbb{J}_{2n}$ is the canonical symplectic matrix defined as

\begin{equation}
\label{eq: Canonical symplectic matrix}
\mathbb{J}_{2n}=\left(\begin{matrix}
0 & \mathbb{I}_{n} \\
-\mathbb{I}_{n} & 0
\end{matrix}\right),
\end{equation}

\noindent
with $\mathbb{I}_{n}$ denoting the $n \times n$ identity matrix. In a manner similar to its deterministic counterpart (see, e.g., \cite{HLWGeometric}, \cite{HolmGMS}, \cite{MarsdenRatiuSymmetry}), the stochastic Hamiltonian system \eqref{eq: Stochastic Hamiltonian system in terms of J} possesses several characteristic properties. Its stochastic flow $F_t$ (almost surely) preserves the canonical symplectic form $\Omega = \sum_{i=1}^n dq^i \wedge dp^i$ on the phase space $\mathbb{R}^n \times \mathbb{R}^n$. This property expressed in terms of the standard basis for $\mathbb{R}^{2n}$ takes the form of the condition

\begin{equation}
\label{eq: Symplecticity of the flow}
(DF_t)^T \mathbb{J}_{2n} DF_t = \mathbb{J}_{2n},
\end{equation} 

\noindent
where $DF_t$ denotes the Jacobi matrix of the flow map $F_t$. Moreover, if the Hamiltonian function $H$ commutes with all the Hamiltonian functions $h_\nu$, that is, if the canonical Poisson bracket satisfies

\begin{equation}
\label{eq: Poisson bracket of the Hamiltonians}
\{H, h_\nu\} = (\nabla_u H)^T \mathbb{J}_{2n} \nabla_u h_\nu = \sum_{i=1}^n \bigg( \frac{\partial H}{\partial q^i} \frac{\partial h_\nu}{\partial p^i} - \frac{\partial H}{\partial p^i} \frac{\partial h_\nu}{\partial q^i} \bigg) = 0
\end{equation}

\noindent
for all $\nu=1,\ldots, m$, then the flow $F_t$ also preserves (almost surely) the Hamiltonian function $H$, which can be easily verified by calculating the stochastic differential

\begin{equation}
\label{eq: Stochastic differential of H}
d_tH(u(t)) = \sum_{\nu=1}^m \{H,h_\nu\}\circ dW^\nu(t),
\end{equation}

\noindent
where $u(t)=F_t(u_0)$ is the solution of \eqref{eq: Stochastic Hamiltonian system in terms of J} with the initial condition $u(0)=u_0$, and we used the rules of Stratonovich calculus (see \cite{Bismut}, \cite{HolmTyranowskiGalerkin}, \cite{LaCa-Or2008}, \cite{MilsteinRepin}).

Suppose we would like to solve \eqref{eq: Stochastic Hamiltonian system in terms of J} for a single realization of the Wiener process. Again, if the dimension $2n$ of the stochastic process $u(t)$ is very high, then the system \eqref{eq: Stochastic Hamiltonian system in terms of J} becomes expensive to solve. In principle, the POD method described in Section~\ref{sec: Model reduction for SDEs} could be applied to \eqref{eq: Stochastic Hamiltonian system in terms of J}, but there is no guarantee that the reduced system \eqref{eq: POD reduced stochastic differential equation} will retain the Hamiltonian structure and the geometric properties of \eqref{eq: Stochastic Hamiltonian system in terms of J}. The PSD method, first proposed in \cite{PengMohseni2016} for deterministic systems, constructs a reduced model that is also a Hamiltonian system. A $2n \times 2k$ matrix is called symplectic if it satisfies the condition

\begin{equation}
\label{eq: Symplectic matrix}
A^T \mathbb{J}_{2n} A = \mathbb{J}_{2k}.
\end{equation}

\noindent
For a symplectic matrix $A$, we can define its symplectic inverse $A^+ = \mathbb{J}_{2k}^T A^T \mathbb{J}_{2n}$. It is an inverse in the sense that $A^+ A = \mathbb{I}_{2k}$. Let $\xi$ be a vector in $\mathbb{R}^{2k}$. Substituting $u = A\xi$ in \eqref{eq: Stochastic Hamiltonian system in terms of J} yields a reduced equation

\begin{align}
\label{eq: Reduced stochastic Hamiltonian system}
d_t \xi &= A^+ \mathbb{J}_{2n} \nabla_u H(u) \, dt + \sum_{\nu=1}^m A^+\mathbb{J}_{2n} \nabla_u h_\nu(u) \circ dW^\nu(t) \nonumber \\
        &= \mathbb{J}_{2k}\nabla_\xi H(A\xi) \, dt + \sum_{\nu=1}^m \mathbb{J}_{2k} \nabla_\xi h_\nu(A \xi) \circ dW^\nu(t),
\end{align}

\noindent
which is a lower-dimensional stochastic Hamiltonian system with the Hamiltonian functions $\tilde H(\xi) = H(A \xi)$ and $\tilde h_\nu(\xi) = h_\nu(A \xi)$ for $\nu=1,\ldots,m$. The corresponding initial condition is calculated as $\xi_0 = A^+ u_0$, where $u_0$ is an initial condition for~\eqref{eq: Stochastic Hamiltonian system in terms of J}. Given a set of empirical data on a Hamiltonian system, the PSD method constructs a symplectic matrix $A$ which best approximates that data in a lower-dimensional subspace. Several algorithms have been proposed to construct $A$, namely the cotangent lift algorithm, the complex SVD algorithm, the greedy algorithm, and the nonlinear programming algorithm (see \cite{AfkhamHesthaven2017}, \cite{PengMohseni2016}). Any of these algorithms can be adapted in the stochastic setting, too, but in this work we will focus only on the cotangent lift algorithm, as it possesses two advantages discussed below. The cotangent lift algorithm constructs a symplectic matrix $A$ which has the special block diagonal structure

\begin{equation}
\label{eq: Cotangent lift matrix A}
A = \left( \begin{matrix}
\Phi & 0 \\
0 & \Phi
\end{matrix} \right),
\end{equation}

\noindent
where $\Phi$ is an $n \times k$ matrix with orthogonal columns, i.e., $\Phi^T \Phi = \mathbb{I}_{k}$. Suppose snapshots of a solution are given as an $n \times 2r$ matrix $\Delta$ of the form

\begin{equation}
\label{eq: Snapshots of the solution for cotangent lift}
\Delta = [q(t_1)\,\,\ldots \,\,q(t_r)\,\,p(t_1)\,\,\ldots\,\,p(t_r)].
\end{equation}

\noindent
The SVD of $\Delta$ is truncated after the first $k$ largest singular values, similar to \eqref{eq: SVD of Delta}. The matrix $\Phi$ is then chosen as $\Phi=U_k$. With the cotangent lift matrix $A$ as in \eqref{eq: Cotangent lift matrix A}, the reduced stochastic Hamiltonian system \eqref{eq: Reduced stochastic Hamiltonian system} can be written as

\begin{align}
\label{eq: Partitioned reduced stochastic Hamiltonian system}
d_t \eta &= \phantom{-}\Phi^T\frac{\partial H}{\partial p}\Big(\Phi \eta, \Phi \chi\Big)\,dt + \sum_{\nu=1}^{m} \Phi^T\frac{\partial h_\nu}{\partial p}\Big(\Phi \eta, \Phi \chi\Big)\circ dW^\nu(t), \nonumber \\
d_t \chi &= -\Phi^T\frac{\partial H}{\partial q}\Big(\Phi \eta, \Phi \chi\Big)\,dt - \sum_{\nu=1}^{m} \Phi^T\frac{\partial h_\nu}{\partial q}\Big(\Phi \eta, \Phi \chi\Big)\circ dW^\nu(t),
\end{align}

\noindent
where $\xi = (\eta, \chi)$.

The advantage of using the cotangent lift algorithm is twofold. First, if the original system \eqref{eq: Stochastic Hamiltonian system - general} is separable, so is the reduced system \eqref{eq: Reduced stochastic Hamiltonian system}: when $H(q,p) = T(p)+V(q)$, then $\tilde H(\eta,\chi) = H(\Phi \eta, \Phi \chi) = T(\Phi \chi) + V(\Phi \eta)$; analogously for the Hamiltonians $h_\nu$. This is significant, because separable Hamiltonian systems appear often in practical applications, and many symplectic integrators, such as the stochastic St\"{o}rmer-Verlet method, become explicit in that case (see Section~\ref{sec: PSD time integration}). Second, the PSD method with the cotangent lift symplectic matrix \eqref{eq: Cotangent lift matrix A} preserves also the Lagrange-d'Alembert structure of stochastic forced Hamiltonian systems (see Section~\ref{sec: Model reduction for stochastic forced Hamiltonian systems}).
 
\subsection{Symplectic Discrete Empirical Interpolation Method}
\label{sec: Symplectic Discrete Empirical Interpolation Method}

Just like in the case of the POD method, if the drift and diffusion terms in the reduced model \eqref{eq: Reduced stochastic Hamiltonian system} are complicated nonlinear functions, the PSD method may not bring any computational savings. A technique called the Symplectic Discrete Empirical Interpolation Method (SDEIM), which applies DEIM to approximate the symplectic projection of the nonlinear terms, has been developed in \cite{PengMohseni2016}. We will argue that this technique can be adapted also in the stochastic setting. For completeness and for the benefit of the reader, below we briefly outline the main ideas of SDEIM. The further details can be found in \cite{PengMohseni2016}. Let the gradient of the Hamiltonian function $H$ be split into the linear and nonlinear parts as

\begin{equation}
\label{eq: Gradient of H - decomposition}
\nabla_u H(u)= Lu + a_N(u).
\end{equation}

\noindent
Similar to \eqref{eq: DEIM approximation of f_N}, one can use DEIM to approximate the nonlinear vector term $a_N(u)$. The SDEIM approximation of the drift term in \eqref{eq: Reduced stochastic Hamiltonian system} then takes the form

\begin{equation}
\label{eq: SDEIM approximation of the drift term}
A^+ \mathbb{J}_{2n} \nabla_u H(u) \approx \mathbb{J}_{2k}\bar L \xi + \mathbb{J}_{2k}Wg(\xi),
\end{equation}

\noindent
where

\begin{equation}
\label{eq: L bar, W, and g(xi) for SDEIM}
\bar L = A^T L A, \qquad\qquad W = A^T \Psi (P^T \Psi)^{-1}, \qquad\qquad g(\xi) = P^T a_N(A \xi).
\end{equation}

\noindent
Note that the matrices $\bar L$ and $W$ are calculated only once at the offline stage, and at the online stage the function $g(\xi)$ evaluates only $\bar k$ components of $a_N(A \xi)$. A similar SDEIM approximation can be applied to each of the diffusion terms $A^+\mathbb{J}_{2n} \nabla_u h_\nu(u)$, thus reducing the computational complexity of the reduced system \eqref{eq: Reduced stochastic Hamiltonian system}. It should, however, be noted that the PSD reduced system with the SDEIM approximation of the drift and diffusion terms is not strictly Hamiltonian anymore. Similar to \eqref{eq: Gradient of H - decomposition}, let the gradient of the Hamiltonian functions $h_\nu$ be split into the linear and nonlinear parts as

\begin{equation}
\label{eq: Gradient of h_nu - decomposition}
\nabla_u h_\nu(u)= L_\nu u + a_{N,\nu}(u)
\end{equation}

\noindent
for $\nu=1,\dots,m$. The PSD reduced system \eqref{eq: Reduced stochastic Hamiltonian system} with the SDEIM approximation takes the form

\begin{align}
\label{eq: PSD+SDEIM reduced system}
d_t \xi = \mathbb{J}_{2k} \big( \bar L \xi + A^T \bar a_N(A \xi) \big) \, dt + \sum_{\nu=1}^m \mathbb{J}_{2k} \big( \bar L_\nu \xi + A^T \bar a_{N,\nu}(A \xi) \big) \circ dW^\nu(t),
\end{align}

\noindent
where $\bar a_{N}$ and $\bar a_{N,\nu}$ denote the DEIM approximations of $a_{N}$ and $a_{N,\nu}$, respectively. In the following theorem, which is a stochastic generalization of Theorem~5.1 in \cite{PengMohseni2016}, we show in what sense the SDEIM method approximates the properties of a PSD reduced stochastic Hamiltonian system.
\newpage
\begin{thm}
\label{thm: Theorem on dE}
Let the Hamiltonians of the reduced system \eqref{eq: Reduced stochastic Hamiltonian system} satisfy

\begin{equation}
\label{eq: Poisson bracket of the reduced Hamiltonians}
\{\tilde H, \tilde h_\nu\} = (\nabla_\xi \tilde H)^T \mathbb{J}_{2k} \nabla_\xi \tilde h_\nu = 0
\end{equation}

\noindent
for $\nu=1,\dots,m$, and let $\xi(t)$ be the solution of \eqref{eq: PSD+SDEIM reduced system} with the initial condition $\xi(0)=\xi_0$. Then the stochastic differential of the Hamiltonian $\tilde H$ along $\xi(t)$, that is $E(t)=\tilde H(\xi(t))$, takes the form

\begin{equation}
\label{eq: Stochastic differential of E(t)}
d_tE(t) = \gamma(t)\,dt + \sum_{\nu=1}^m \lambda_\nu(t)\circ dW^\nu(t),
\end{equation}

\noindent
with the drift and diffusion terms given by

\begin{align}
\label{eq: Drift and diffusion terms for dE}
\gamma(t) &= \big(\nabla_\xi \tilde H(\xi)\big)^T \mathbb{J}_{2k} A^T\big( \bar a_N(A \xi) - a_N(A \xi) \big), \nonumber \\
\lambda_\nu(t) &= \big(\nabla_\xi \tilde H(\xi)\big)^T \mathbb{J}_{2k} A^T\big( \bar a_{N,\nu}(A \xi) - a_{N,\nu}(A \xi) \big),
\end{align}

\noindent
for $\nu=1,\dots,m$. Moreover, upper bounds on the drift and diffusion terms are given by

\begin{align}
\label{eq: Upper bounds on drift and diffusion terms for dE}
\big|\gamma(t)\big| &\leq  C \cdot \big\| \nabla_\xi \tilde H(\xi) \big\| \cdot \big\| (\mathbb{I}-\Psi \Psi^T) a_N(A \xi) \big\|, \nonumber \\
\big|\lambda_\nu(t)\big| &\leq C_\nu \cdot \big\| \nabla_\xi \tilde H(\xi) \big\| \cdot \big\| (\mathbb{I}-\Psi_\nu \Psi_\nu^T) a_{N,\nu}(A \xi) \big\|,
\end{align}
for $\nu=1,\dots,m$, where $C=\| (P^T\Psi)^{-1} \|$ and $C_\nu=\| (P_\nu^T\Psi_\nu)^{-1} \|$ are constants, and the matrices $P$, $\Psi$ and $P_\nu$, $\Psi_\nu$ define the DEIM approximations of the nonlinear terms $a_N$ and $a_{N,\nu}$, respectively.
\end{thm}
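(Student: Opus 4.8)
The plan is to differentiate $E(t)=\tilde H(\xi(t))$ using the Stratonovich chain rule and then to observe that the drift and diffusion vector fields of the SDEIM system \eqref{eq: PSD+SDEIM reduced system} differ from those of the exact PSD reduced Hamiltonian system \eqref{eq: Reduced stochastic Hamiltonian system} only through the DEIM interpolation errors. Since the Stratonovich differential obeys the ordinary chain rule (exactly as used in \eqref{eq: Stochastic differential of H}), we have $d_t E(t) = (\nabla_\xi \tilde H(\xi))^T\, d_t\xi$. Substituting \eqref{eq: PSD+SDEIM reduced system} expresses $d_t E$ as a $dt$-term plus $\circ\, dW^\nu$-terms, whose coefficients are $(\nabla_\xi \tilde H(\xi))^T \mathbb{J}_{2k}(\bar L\,\xi + A^T \bar a_N(A\xi))$ and $(\nabla_\xi \tilde H(\xi))^T \mathbb{J}_{2k}(\bar L_\nu\,\xi + A^T \bar a_{N,\nu}(A\xi))$, respectively.

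Next I would use the splittings \eqref{eq: Gradient of H - decomposition} and \eqref{eq: Gradient of h_nu - decomposition} together with the identity $\bar L = A^T L A$ from \eqref{eq: L bar, W, and g(xi) for SDEIM} and its analogue $\bar L_\nu = A^T L_\nu A$ to rewrite $\nabla_\xi \tilde H(\xi) = A^T \nabla_u H(A\xi) = \bar L\,\xi + A^T a_N(A\xi)$ and $\nabla_\xi \tilde h_\nu(\xi) = \bar L_\nu\,\xi + A^T a_{N,\nu}(A\xi)$. Inserting $\bar a_N = a_N + (\bar a_N - a_N)$ in the drift coefficient splits it as $(\nabla_\xi \tilde H)^T \mathbb{J}_{2k} \nabla_\xi \tilde H + (\nabla_\xi \tilde H)^T \mathbb{J}_{2k} A^T(\bar a_N(A\xi) - a_N(A\xi))$; the first summand vanishes because $\mathbb{J}_{2k}$ is skew-symmetric, leaving precisely $\gamma(t)$. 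The same step on the $\nu$-th diffusion coefficient yields $\{\tilde H, \tilde h_\nu\} + \lambda_\nu(t)$, and the Poisson bracket vanishes by the hypothesis \eqref{eq: Poisson bracket of the reduced Hamiltonians}, leaving $\lambda_\nu(t)$. This establishes \eqref{eq: Stochastic differential of E(t)} with the drift and diffusion terms \eqref{eq: Drift and diffusion terms for dE}.

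For the bounds \eqref{eq: Upper bounds on drift and diffusion terms for dE} I would apply the Cauchy--Schwarz inequality to $\gamma(t)$ and $\lambda_\nu(t)$, use that $\mathbb{J}_{2k}$ is orthogonal and that the cotangent-lift matrix $A$ has orthonormal columns (so $\|\mathbb{J}_{2k} A^T v\| \le \|v\|$), and then invoke the standard DEIM error estimate $\|\bar a_N(A\xi) - a_N(A\xi)\| \le \|(P^T\Psi)^{-1}\|\, \|(\mathbb{I} - \Psi\Psi^T) a_N(A\xi)\|$ from \cite{Chaturantabut2010}, together with its analogue in terms of $P_\nu, \Psi_\nu$ for $a_{N,\nu}$. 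Chaining these inequalities gives $|\gamma(t)| \le C\, \|\nabla_\xi \tilde H(\xi)\|\, \|(\mathbb{I} - \Psi\Psi^T) a_N(A\xi)\|$ with $C = \|(P^T\Psi)^{-1}\|$, and likewise for $\lambda_\nu(t)$ with $C_\nu = \|(P_\nu^T\Psi_\nu)^{-1}\|$.

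The routine ingredients are the chain rule and the submultiplicative norm estimates; the only point that needs genuine care is the bookkeeping showing that the ``exact-reduced'' parts of the drift and of each diffusion coefficient are exactly $\mathbb{J}_{2k}\nabla_\xi \tilde H$ and $\mathbb{J}_{2k}\nabla_\xi \tilde h_\nu$, so that contracting them with $\nabla_\xi \tilde H$ produces $0$ and $\{\tilde H, \tilde h_\nu\} = 0$, leaving only the interpolation-error contributions. I would also flag the tacit assumption that $A$ is the cotangent-lift matrix (orthonormal columns), which is what makes the constants come out exactly as stated; for a general symplectic $A$ an extra factor $\|A^T\|$ would enter the bounds.
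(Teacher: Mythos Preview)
Your proposal is correct and follows essentially the same route as the paper's proof: apply the Stratonovich chain rule to $E(t)=\tilde H(\xi(t))$, substitute \eqref{eq: PSD+SDEIM reduced system}, and then recognize that the ``exact'' parts of the drift and diffusion coefficients equal $\mathbb{J}_{2k}\nabla_\xi\tilde H$ and $\mathbb{J}_{2k}\nabla_\xi\tilde h_\nu$, so that their inner products with $\nabla_\xi\tilde H$ vanish by skew-symmetry and by the hypothesis \eqref{eq: Poisson bracket of the reduced Hamiltonians}. The paper handles the bounds \eqref{eq: Upper bounds on drift and diffusion terms for dE} by simply citing Theorem~5.1 of \cite{PengMohseni2016}; your explicit chain of Cauchy--Schwarz, $\|\mathbb{J}_{2k}\|=1$, $\|A^T\|\le 1$, and the DEIM error estimate is exactly what that citation unpacks to, and your remark that the stated constants require $A$ to have orthonormal columns (cotangent lift) is a valid caveat that the paper leaves implicit.
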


\begin{proof}
We calculate the stochastic differential $d_t E(t)$ as

\begin{align}
\label{eq: Calculating the stochastic differential dE in the proof}
d_t E(t) &= \big(\nabla_\xi \tilde H(\xi)\big)^T\mathbb{J}_{2k} \big( \bar L \xi + A^T \bar a_N(A \xi) \big) \, dt + \sum_{\nu=1}^m \big(\nabla_\xi \tilde H(\xi)\big)^T\mathbb{J}_{2k} \big( \bar L_\nu \xi + A^T \bar a_{N,\nu}(A \xi) \big) \circ dW^\nu(t) \nonumber \\
&=\big(\nabla_\xi \tilde H(\xi)\big)^T\mathbb{J}_{2k} \big( \bar L \xi + A^T \bar a_N(A \xi) - \nabla_\xi \tilde H(\xi)\big) \, dt \nonumber\\
&\qquad\qquad\qquad\qquad\qquad\quad + \sum_{\nu=1}^m \big(\nabla_\xi \tilde H(\xi)\big)^T\mathbb{J}_{2k} \big( \bar L_\nu \xi + A^T \bar a_{N,\nu}(A \xi) - \nabla_\xi \tilde h_\nu(\xi) \big) \circ dW^\nu(t) \nonumber \\
&=\big(\nabla_\xi \tilde H(\xi)\big)^T \mathbb{J}_{2k} A^T\big( \bar a_N(A \xi) - a_N(A \xi) \big)\,dt \nonumber \\
&\qquad\qquad\qquad\qquad\qquad\quad+\sum_{\nu=1}^m\big(\nabla_\xi \tilde H(\xi)\big)^T \mathbb{J}_{2k} A^T\big( \bar a_{N,\nu}(A \xi) - a_{N,\nu}(A \xi) \big)\circ dW^\nu(t) \nonumber \\
&=\gamma(t)\,dt + \sum_{\nu=1}^m \lambda_\nu(t)\circ dW^\nu(t),
\end{align}
where in the first equality we used the rules of Stratonovich calculus and substituted \eqref{eq: PSD+SDEIM reduced system}, and in the second equality we used $\{\tilde H, \tilde H\}=0$ and $\{\tilde H, \tilde h_\nu\}=0$. The upper bounds \eqref{eq: Upper bounds on drift and diffusion terms for dE} are obtained like in the proof of Theorem~5.1 in \cite{PengMohseni2016}.\\
\end{proof}

Even though the system \eqref{eq: PSD+SDEIM reduced system} is not necessarily Hamiltonian, Theorem~\ref{thm: Theorem on dE} shows that $|\gamma(t)|\rightarrow 0$ and $|\lambda_\nu(t)|\rightarrow 0$ when $ \big\| (\mathbb{I}-\Psi \Psi^T) a_{N}(A \xi) \big\| \rightarrow 0$ and $\big\| (\mathbb{I}-\Psi_\nu \Psi_\nu^T) a_{N,\nu}(A \xi) \big\|\rightarrow 0$.

\subsection{Proper Symplectic Decomposition for stochastic forced Hamiltonian systems}
\label{sec: Model reduction for stochastic forced Hamiltonian systems}

The PSD method described in Section~\ref{sec: Proper Symplectic Decomposition} can also be applied to Hamiltonian systems subject to external forcing. Stochastic forced Hamiltonian systems take the form 

\begin{align}
\label{eq: Stochastic dissipative Hamiltonian system}
d_t q &= \frac{\partial H}{\partial p}dt + \sum_{\nu=1}^m\frac{\partial h_\nu}{\partial p}\circ dW^\nu(t), \nonumber \\
d_t p &= \bigg[-\frac{\partial H}{\partial q} + F(q,p) \bigg] dt + \sum_{\nu=1}^m \bigg[-\frac{\partial h_\nu}{\partial q}+f_\nu(q,p)\bigg]\circ dW^\nu(t),
\end{align}

\noindent
where $H=H(q,p)$ and $h_\nu=h_\nu(q,p)$ for $\nu=1,\ldots,m$ are the Hamiltonian functions, $F=F(q,p)$ and $f_\nu=f_\nu(q,p)$ are the forcing terms, and $W(t)=(W^1(t),\ldots,W^m(t))$ is the standard $m$-dimensional Wiener process. Applications of such systems arise in many models in physics, chemistry, and biology. Particular examples include molecular dynamics (see, e.g., \cite{Beard2000}, \cite{Izaguirre2001}, \cite{Lacasta2004}, \cite{Skeel1999}), dissipative particle dynamics (see, e.g., \cite{Ripoll2001}), investigations of the dispersion of passive tracers in turbulent flows (see, e.g., \cite{Sawford2001}, \cite{Thomson1987}), energy localization in thermal equilibrium (see, e.g., \cite{Reigada1999}), lattice dynamics in strongly anharmonic crystals (see, e.g., \cite{Gornostyrev1996}), description of noise induced transport in stochastic ratchets (see, e.g., \cite{Landa1998}), and collisional kinetic plasmas (\cite{Kleiber2011}, \cite{KrausTyranowski2019}, \cite{Sonnendrucker2015}, \cite{TyranowskiVlasovMaxwell}). While their stochastic flow is not symplectic in general, stochastic forced Hamiltonian systems have an underlying variational structure, that is, their solutions satisfy the stochastic Lagrange-d'Alembert principle (see \cite{KrausTyranowski2019}). It is therefore beneficial to preserve this variational structure both in time integration and in deriving reduced models. Stochastic Lagrange-d'Alembert schemes for time integration of \eqref{eq: Stochastic dissipative Hamiltonian system}, first proposed in \cite{KrausTyranowski2019}, demonstrate better accuracy and stability properties in long-time simulations than non-geometric stochastic methods.

It was shown in \cite{PengMohseniProceedings2016} and \cite{PengMohseniPreprint2016} that the PSD method with the cotangent lift algorithm preserves the Lagrange-d'Alembert structure of deterministic forced Hamiltonian systems (see also \cite{AfkhamHesthaven2019}, \cite{HesthavenReview2021}). It is straightforward to verify that this also holds for stochastic forced Hamiltonian systems. Indeed, substituting $q=\Phi \eta$ and $p=\Phi \chi$ in \eqref{eq: Stochastic dissipative Hamiltonian system} yields, similar to \eqref{eq: Partitioned reduced stochastic Hamiltonian system}, a reduced system of the form

\begin{align}
\label{eq: Reduced stochastic forced Hamiltonian system}
d_t \eta &= \frac{\partial \tilde H}{\partial \chi}\Big(\eta, \chi\Big)\,dt + \sum_{\nu=1}^m\frac{\partial \tilde h_\nu}{\partial \chi}\Big(\eta, \chi\Big)\circ dW^\nu(t), \nonumber \\
d_t \chi &= \bigg[-\frac{\partial \tilde H}{\partial \eta}\Big(\eta, \chi\Big) + \tilde F(\eta, \chi) \bigg] dt + \sum_{\nu=1}^m \bigg[-\frac{\partial \tilde h_\nu}{\partial \eta}\Big(\eta, \chi\Big)+\tilde f_\nu(\eta, \chi)\bigg]\circ dW^\nu(t),
\end{align}

\noindent
with 

\begin{align}
\label{eq: Hamiltonians and forces for the reduced stochastic forced Hamiltonian system}
\tilde H(\eta, \chi) &= H(\Phi \eta, \Phi \chi),  & \tilde F(\eta, \chi) &= \Phi^T F(\Phi \eta, \Phi \chi), \nonumber \\
\tilde h_\nu(\eta, \chi) &= h_\nu(\Phi \eta, \Phi \chi), & \tilde f_\nu(\eta, \chi) &= \Phi^T f_\nu(\Phi \eta, \Phi \chi),
\end{align}

\noindent
that is, also a stochastic forced Hamiltonian system. The matrix $\Phi$ can be constructed from empirical data as described in Section~\ref{sec: Proper Symplectic Decomposition}.

\subsection{Time integration}
\label{sec: PSD time integration}

The full \eqref{eq: Stochastic Hamiltonian system in terms of J} and reduced \eqref{eq: Reduced stochastic Hamiltonian system} models can be solved numerically using any general purpose stochastic scheme mentioned in Section~\ref{sec: POD Time integration}. However, since these models are Hamiltonian, it is advisable to integrate them using structure-preserving methods. Stochastic symplectic integrators, similar to their deterministic counterparts, preserve the symplecticity of the Hamiltonian flow and demonstrate good energy behavior in long-time simulations (see \cite{Anmarkrud2017}, \cite{AntonWeak2014}, \cite{AntonGlobalError2013}, \cite{Anton2013}, \cite{BrehierCohen2021}, \cite{Burrage2012}, \cite{Burrage2014}, \cite{ChenCohen2020}, \cite{Cohen2014}, \cite{CohenVilmart2021}, \cite{Cordoni2022}, \cite{AntonHighOrder2014}, \cite{HolmTyranowskiSolitons}, \cite{HolmTyranowskiGalerkin}, \cite{HongSunBook2023}, \cite{Hong2015}, \cite{KrausTyranowski2019}, \cite{MaDing2012}, \cite{MaDing2015}, \cite{MilsteinRepin2001}, \cite{MilsteinRepin}, \cite{Misawa2010}, \cite{SunWang2016}, \cite{WangPHD}, \cite{Wang2014}, \cite{Wang2017}, \cite{Zhou2017} and the references therein). In this work we will focus on two stochastic symplectic Runge-Kutta methods, namely the stochastic midpoint method \eqref{eq: Stochastic midpoint method}, which is symplectic when applied to a Hamiltonian system, and the stochastic St\"{o}rmer-Verlet method (\cite{HolmTyranowskiGalerkin}, \cite{KrausTyranowski2019}, \cite{MaDing2015}). The latter method for \eqref{eq: Stochastic Hamiltonian system - general} takes the form

\begin{align}
	\label{eq:Stochastic Stormer-Verlet method}
	\Lambda&= p_i - \frac{1}{2} \frac{\partial H}{\partial q} \big(q_i, \Lambda\big)\Delta t 
	           - \sum_{\nu=1}^m\frac{1}{2} \frac{\partial h_\nu}{\partial q} \big(q_i, \Lambda\big)\Delta W^\nu, \nonumber \\
	q_{i+1}&= q_i + \frac{1}{2} \frac{\partial H}{\partial p} \big(q_i, \Lambda\big)\Delta t
	              + \frac{1}{2} \frac{\partial H}{\partial p} \big(q_{i+1}, \Lambda\big)\Delta t
	              + \sum_{\nu=1}^m\frac{1}{2} \frac{\partial h_\nu}{\partial p} \big(q_i, \Lambda\big)\Delta W^\nu
	              + \sum_{\nu=1}^m\frac{1}{2} \frac{\partial h_\nu}{\partial p} \big(q_{i+1}, \Lambda\big)\Delta W^\nu, \nonumber \\
	p_{i+1}&= \Lambda- \frac{1}{2} \frac{\partial H}{\partial q} \big(q_{i+1}, \Lambda\big)\Delta t 
	              - \sum_{\nu=1}^m\frac{1}{2} \frac{\partial h_\nu}{\partial q} \big(q_{i+1}, \Lambda\big)\Delta W^\nu,
\end{align}

\noindent
where $\Delta t$ denotes the time step, $\Delta W = (\Delta W^1, \ldots, \Delta W^m)$ are the increments of the Wiener process, and $\Lambda$ is the internal momentum stage. The stochastic St\"{o}rmer-Verlet method is strongly convergent of order 1 for systems driven by a commutative noise, and of order 1/2 in the non-commutative case. In case the stochastic Hamiltonian system \eqref{eq: Stochastic Hamiltonian system - general} is separable, the numerical scheme \eqref{eq:Stochastic Stormer-Verlet method} becomes explicit (see \cite{HolmTyranowskiGalerkin}).

Similarly, it is advisable that the full \eqref{eq: Stochastic dissipative Hamiltonian system} and reduced \eqref{eq: Reduced stochastic forced Hamiltonian system} models are solved using structure-preserving methods. Stochastic Lagrange-d'Alembert integrators (see \cite{KrausTyranowski2019}) generalize the notion of symplectic integrators by preserving the underlying variational structure of forced Hamiltonian systems. As shown in \cite{KrausTyranowski2019}, the stochastic St\"{o}rmer-Verlet method, when applied to a forced system, is a Lagrange-d'Alembert integrator, and takes the form

\begin{align}
	\label{eq:Stochastic Stormer-Verlet method for forced systems}
	\Lambda &= p_i + \frac{1}{2} \bigg[ -\frac{\partial H}{\partial q} \big(q_i, \Lambda \big) + F\big(q_i, \Lambda \big) \bigg] \Delta t 
	           + \frac{1}{2} \sum_{\nu=1}^m \bigg[ -\frac{\partial h_\nu}{\partial q} \big(q_i, \Lambda \big) + f_\nu\big(q_i, \Lambda \big) \bigg] \Delta W^\nu, \nonumber \\
	q_{i+1}&= q_i + \frac{1}{2} \frac{\partial H}{\partial p} \big(q_i, \Lambda \big)\Delta t
	              + \frac{1}{2} \frac{\partial H}{\partial p} \big(q_{i+1}, \Lambda \big)\Delta t
	              + \sum_{\nu=1}^m\frac{1}{2} \frac{\partial h_\nu}{\partial p} \big(q_i, \Lambda\big)\Delta W^\nu
	              + \sum_{\nu=1}^m\frac{1}{2} \frac{\partial h_\nu}{\partial p} \big(q_{i+1}, \Lambda\big)\Delta W^\nu, \nonumber \\
	p_{i+1}&= \Lambda + \frac{1}{2} \bigg[ -\frac{\partial H}{\partial q} \big(q_{i+1}, \Lambda \big) + F\big(q_{i+1}, \Lambda \big) \bigg] \Delta t 
	           + \frac{1}{2} \sum_{\nu=1}^m \bigg[ -\frac{\partial h_\nu}{\partial q} \big(q_{i+1}, \Lambda \big) + f_\nu\big(q_{i+1}, \Lambda \big) \bigg]\Delta W^\nu.
	\end{align}
	
	\noindent
If the Hamiltonians in \eqref{eq: Stochastic dissipative Hamiltonian system} are separable, the second equation in \eqref{eq:Stochastic Stormer-Verlet method for forced systems} becomes explicit. If in addition the forcing terms $F$ and $f_\nu$ have special forms, then further improvements in efficiency are possible. For instance, if the forcing terms depend linearly on $p$, as is often the case in practical applications, then the first equation is a linear equation for $\Lambda$, and can be solved using linear solvers. In case the forcing terms are independent of $p$ altogether, then the whole method becomes fully explicit.

\section{Model reduction for a large number of Monte Carlo runs}
\label{sec: Reduction of the number of Monte Carlo runs}

In Sections~\ref{sec: Model reduction for SDEs} and \ref{sec: Model reduction for stochastic Hamiltonian systems} we have demonstrated how model reduction can be used to efficiently solve high dimensional SDEs for one given realization of the Wiener process. In this section we will argue that model reduction also allows an efficient approach to another typical computational issue arising in numerical simulations of SDEs. In order to compute the statistical properties of the solution of \eqref{eq: Stochastic differential equation - general} or \eqref{eq: Stochastic Hamiltonian system - general}, one typically needs a very large number of Monte Carlo runs, that is, one needs to simulate the solution for a very large number of independent realizations of the Wiener process. We will show that when empirical data on the considered system is available, it can be used to construct a reduced model which requires less computational effort to yield the desired results. 

\subsection{Proper Orthogonal Decomposition}
\label{sec: Proper Orthogonal Decomposition for Monte Carlo}

Suppose we are interested in solving the stochastic differential equation

\begin{equation}
\label{eq: SDE for Monte Carlo}
d_t X = \Gamma(X)\,dt + B(X)\circ dW(t),
\end{equation}

\noindent
for an $N$-dimensional stochastic process $X(t)$, where $\Gamma:\mathbb{R}^N \longrightarrow \mathbb{R}^N$ is the drift function, $B:\mathbb{R}^N\longrightarrow \mathbb{R}^N$ is the diffusion function, and $W(t)$ is the standard one-dimensional Wiener process. For convenience and clarity we restrict ourselves to a one-dimensional noise; the generalization to a multidimensional Wiener process is straightforward. Suppose we would like to compute the statistical properties of the stochastic process $X(t)$, e.g., its mean, variance, etc. For this we need to solve \eqref{eq: SDE for Monte Carlo} for a large number $M$ of independent realizations of the Wiener process. Unlike in the situation considered in Section~\ref{sec: Proper Orthogonal Decomposition}, here we do not assume that the dimension $N$ is very high. The main computational cost comes from the high value of $M$. Therefore, in order to apply model reduction, let us turn this problem into the problem discussed in Section~\ref{sec: Model reduction for SDEs}. Let us consider $M$ stochastic processes $X_1, \ldots, X_M$ satisfying the system of stochastic differential equations

\begin{align}
\label{eq: System of SDEs for Monte Carlo}
d_t X_1 &= \Gamma(X_1)\,dt + B(X_1)\circ dW^1(t), \nonumber \\
        & \phantom{= F(X_1)\,dt+} \vdots\\
d_t X_M &= \Gamma(X_M)\,dt + B(X_M)\circ dW^M(t), \nonumber
\end{align}

\noindent
where $W^1(t), \ldots, W^M(t)$ are the components of the standard $M$-dimensional Wiener process. Note that the equations in \eqref{eq: System of SDEs for Monte Carlo} are decoupled from each other, and each equation is driven by an independent Wiener process $W^\nu(t)$. Therefore, $X_1, \ldots, X_M$ are independent identically distributed (i.i.d.) stochastic processes, each with the same probability density function as the original stochastic process $X$. In this sense \eqref{eq: System of SDEs for Monte Carlo} is equivalent to \eqref{eq: SDE for Monte Carlo}. The advantage is that instead of considering $M$ realizations of the $N$-dimensional stochastic process $X$, one can consider one realization of the $NM$-dimensional process $(X_1, \ldots, X_M)$. The value of any functional of $X$ can then be approximated using the law of large numbers, e.g., the expected value $\mathbb{E}[X] \approx (X_1+\ldots+X_M)/M$. Note that the system \eqref{eq: System of SDEs for Monte Carlo} has the form of \eqref{eq: Stochastic differential equation - general} with $u=(X_1, \ldots, X_M)$, $n=NM$, $m=M$, and the drift and diffusion functions given by

\begin{align}
\label{eq: Drift and diffusion functions for Monte Carlo}
a(u) = \begin{pmatrix}
\Gamma(X_1) \\
\vdots\\
\vdots\\
\vdots\\
\Gamma(X_M)
\end{pmatrix}, \qquad
b_1(u) = \begin{pmatrix}
B(X_1) \\
0\\
\vdots\\
\vdots\\
0
\end{pmatrix},\qquad
b_2(u) = \begin{pmatrix}
0\\
B(X_2) \\
0\\
\vdots\\
0
\end{pmatrix},\quad\ldots \quad 
b_M(u) = \begin{pmatrix}
0\\
\vdots\\
\vdots\\
0\\
B(X_M)
\end{pmatrix}.
\end{align}

\noindent
With this setting the POD method described in Section~\ref{sec: Model reduction for SDEs} can now be directly applied, and the corresponding reduced model is given by \eqref{eq: POD reduced stochastic differential equation}. Since the vectors in \eqref{eq: Drift and diffusion functions for Monte Carlo} have a sparse structure, it pays off to split the matrix $U_k$ into $M$ blocks of size $N\times k$ each, that is,

\begin{equation}
\label{eq: Block form of U_k}
U_k = \begin{pmatrix}
U^{(1)}\\
\vdots\\
U^{(M)}
\end{pmatrix}.
\end{equation}

\noindent
Then the drift and diffusion terms in \eqref{eq: POD reduced stochastic differential equation} can be evaluated as

\begin{align}
\label{eq: POD drift and diffusion terms for Monte Carlo}
U^T_ka(U_k\xi) = \sum_{\nu=1}^M \big(U^{(\nu)}\big)^T \Gamma\big(U^{(\nu)} \xi\big) \text{\quad and \quad} U^T_kb_\nu(U_k\xi) = \big(U^{(\nu)}\big)^T B\big(U^{(\nu)} \xi\big) \text{\quad for $\nu=1,\ldots,M$}.
\end{align}

\paragraph{Remark.} A related idea appears in \cite{BoyavalLelievre2010}, where a variance reduction method using the reduced basis paradigm is proposed. Variance reduction methods are a set of techniques to reduce the statistical error appearing in the Monte-Carlo estimation of the output expectation of a random variable. One of such techniques, the control variate method, involves introducing a correlated auxiliary variable (control variate) to reduce the variance of the estimator, leading to more accurate estimations of the expected value of a random variable. In \cite{BoyavalLelievre2010} model reduction is used for the efficient calculation of control variates for a certain time-independent functional of the solution of a given parameter-dependent SDE, rather than for constructing a lower-dimensional stochastic system like \eqref{eq: POD reduced stochastic differential equation}. The reduced bases are constructed in the space of functionals of the solution of the underlying SDE, rather than in the space of solutions themselves; and the data used to construct those bases are the values of the functionals for a selected set of parameters, rather than the snapshots of the trajectories of the underlying stochastic system like \eqref{eq: Empirical data for POD}. The approach outlined in this section is therefore conceptually different from the strategy employed in \cite{BoyavalLelievre2010}.

\subsection{Proper Symplectic Decomposition}
\label{sec: Proper Symplectic Decomposition for Monte Carlo}

Let us now consider the problem of solving the stochastic Hamiltonian system

\begin{align}
\label{eq: Stochastic Hamiltonian system for Monte Carlo}
d_t Q &= \phantom{-}\frac{\partial \bar H}{\partial P}(Q,P)\,dt + \frac{\partial \bar h}{\partial P}(Q,P)\circ dW(t), \nonumber \\
d_t P &= -\frac{\partial \bar H}{\partial Q}(Q,P)\,dt - \frac{\partial \bar h}{\partial Q}(Q,P)\circ dW(t),
\end{align}


\noindent
for $N$-dimensional stochastic processes $Q(t)$ and $P(t)$, where $\bar H:\mathbb{R}^N \times \mathbb{R}^N\longrightarrow \mathbb{R}$ and $\bar h:\mathbb{R}^N \times \mathbb{R}^N\longrightarrow \mathbb{R}$ are the Hamiltonian functions. Suppose we would like to solve \eqref{eq: Stochastic Hamiltonian system for Monte Carlo} for a large number $M$ of independent realizations of the Wiener process $W(t)$. In contrast to the scenario explored in Section~\ref{sec: Proper Symplectic Decomposition}, we do not assume that the dimension $2N$ of the system is very high. Rather, the main computational expense arises due to the large number $M$ of Monte Carlo runs. Therefore, in a spirit similar to Section~\ref{sec: Proper Orthogonal Decomposition for Monte Carlo}, let us consider $2M$ stochastic processes $Q_1,P_1, \ldots,Q_M,P_M$, with each pair $(Q_\nu, P_\nu)$ satisfying the stochastic differential system

\begin{align}
\label{eq: System of stochastic Hamiltonian systems for Monte Carlo}
d_t Q_\nu &= \phantom{-}\frac{\partial \bar H}{\partial P}(Q_\nu,P_\nu)\,dt + \frac{\partial \bar h}{\partial P}(Q_\nu,P_\nu)\circ dW^\nu(t), \nonumber \\
d_t P_\nu &= -\frac{\partial \bar H}{\partial Q}(Q_\nu,P_\nu)\,dt - \frac{\partial \bar h}{\partial Q}(Q_\nu,P_\nu)\circ dW^\nu(t),
\end{align}


\noindent
for $\nu=1,\ldots,M$, where $W^1(t), \ldots, W^M(t)$ are the components of the standard $M$-dimensional Wiener process. Note that the systems \eqref{eq: System of stochastic Hamiltonian systems for Monte Carlo} are decoupled from each other for different values of $\nu$, and each system is driven by an independent Wiener process $W^\nu(t)$. Therefore, the pairs $(Q_\nu, P_\nu)$ for $\nu=1,\ldots,M$ are independent identically distributed (i.i.d.) stochastic processes, each with the same probability density function as the original stochastic process $(Q, P)$. In that sense the system \eqref{eq: System of stochastic Hamiltonian systems for Monte Carlo} is equivalent to \eqref{eq: Stochastic Hamiltonian system for Monte Carlo}. Note that the system \eqref{eq: System of stochastic Hamiltonian systems for Monte Carlo} has the form of \eqref{eq: Stochastic Hamiltonian system - general} with $q=(Q_1,\ldots,Q_M)$, $p=(P_1,\ldots,P_M)$, $n=NM$, $m=M$, and the Hamiltonian functions given by

\begin{equation}
\label{eq: Hamiltonians for Monte Carlo}
H(q,p) = \sum_{\nu=1}^M \bar H(Q_\nu,P_\nu) \text{\qquad and \qquad} h_\nu(q,p) = \bar h(Q_\nu,P_\nu) \text{\qquad for $\nu=1,\ldots,M$}. 
\end{equation}

\noindent
With this setting the PSD method described in Section~\ref{sec: Model reduction for stochastic Hamiltonian systems} can now be applied, and the corresponding reduced stochastic Hamiltonian system is given by \eqref{eq: Partitioned reduced stochastic Hamiltonian system} for the cotangent lift algorithm. Similar to \eqref{eq: Drift and diffusion functions for Monte Carlo}, the diffusion terms in \eqref{eq: System of stochastic Hamiltonian systems for Monte Carlo} have a sparse structure, therefore it pays off to split the matrix~$\Phi$ into $M$ blocks of size $N \times k$

\begin{equation}
\label{eq: Block form of Phi}
\Phi = \begin{pmatrix}
\Phi^{(1)}\\
\vdots\\
\Phi^{(M)}
\end{pmatrix}.
\end{equation}

\noindent
Then the drift and the diffusion terms of the reduced model \eqref{eq: Partitioned reduced stochastic Hamiltonian system} can be expressed as

\begin{align}
\label{eq: PSD drift terms for Monte Carlo}
\Phi^T\frac{\partial H}{\partial p}\Big(\Phi \eta, \Phi \chi\Big) &= \sum_{\nu=1}^M \big(\Phi^{(\nu)}\big)^T\frac{\partial \bar H}{\partial P}\Big(\Phi^{(\nu)} \eta, \Phi^{(\nu)} \chi\Big), \nonumber \\
-\Phi^T\frac{\partial H}{\partial q}\Big(\Phi \eta, \Phi \chi\Big) &= -\sum_{\nu=1}^M \big(\Phi^{(\nu)}\big)^T\frac{\partial \bar H}{\partial Q}\Big(\Phi^{(\nu)} \eta, \Phi^{(\nu)} \chi\Big),
\end{align}

\noindent
and, for $\nu=1,\ldots,M$, 

\begin{align}
\label{eq: PSD diffusion terms for Monte Carlo}
\Phi^T\frac{\partial h_\nu}{\partial p}\Big(\Phi \eta, \Phi \chi\Big) &=  \big(\Phi^{(\nu)}\big)^T\frac{\partial \bar h}{\partial P}\Big(\Phi^{(\nu)} \eta, \Phi^{(\nu)} \chi\Big), \nonumber \\
-\Phi^T\frac{\partial h_\nu}{\partial q}\Big(\Phi \eta, \Phi \chi\Big) &=  -\big(\Phi^{(\nu)}\big)^T\frac{\partial \bar h}{\partial Q}\Big(\Phi^{(\nu)} \eta, \Phi^{(\nu)} \chi\Big).
\end{align}


\paragraph{Remark.} An analogous strategy can be used for the stochastic forced system \eqref{eq: Stochastic dissipative Hamiltonian system}, where similar steps as above have to be applied also to the forcing terms. For brevity, we omit presenting detailed formulas.

\section{Numerical experiments}
\label{sec: Numerical experiments}

We present the results of three numerical experiments that we have carried out to validate the methods proposed in Sections~\ref{sec: Model reduction for SDEs}, \ref{sec: Model reduction for stochastic Hamiltonian systems}, and \ref{sec: Reduction of the number of Monte Carlo runs}. In the first experiment we have applied model reduction to the semi-discretization of a stochastic Nonlinear Schr\"{o}dinger Equation, whereas in the second and third experiments model reduction has been used to reduce the computational cost of the simulations of the Kubo oscillator, unforced and forced, respectively. All computations have been performed in the Julia programming language with the help of the \emph{GeometricIntegrators.jl} library (see \cite{KrausGeometricIntegrators}).

\subsection{Stochastic Nonlinear Schr\"{o}dinger Equation}
\label{sec: Stochastic Nonlinear Schrodinger Equation}

The Nonlinear Schr\"{o}dinger Equation (NLS) is a well-known nonlinear partial differential equation (PDE) with a broad spectrum of applications, ranging from wave propagation in nonlinear media to nonlinear optics, molecular biology, quantum physics, quantum chemistry, and plasma physics (see \cite{SulemBook2007}, \cite{ZakharovManakov1974} and the references therein). Model reduction for semi-discretizations of NLS is considered in, e.g., \cite{AfkhamHesthaven2017}, \cite{Karasozen2018}. Various stochastic perturbations of NLS have been proposed in order to, e.g., take into account inhomogeneities of the media or noisy sources (see \cite{AntonCohen2018}, \cite{Bang1994}, \cite{Bang1995}, \cite{CohenDujardin2017}, \cite{BouardDebussche2001}, \cite{DebusscheMenza2002b}, \cite{DebusscheMenza2002a}, \cite{Elgin1993}, \cite{Falkovich2001}, \cite{Rasmussen1995}). The stochastic NLS equation of the form

\begin{equation}
\label{eq: Stochastic NLS}
i d_t\psi + \bigg(\frac{\partial^2 \psi}{\partial x^2}+\epsilon |\psi|^2\psi \bigg)\,dt + \beta\psi\circ dW(t) = 0,
\end{equation}

\noindent
for a complex-valued function $\psi = \psi(x,t)$, where $\epsilon$, $\beta$ are real parameters, and $i$ denotes the imaginary unit, has been proposed in \cite{Elgin1993} as a model for the propagation of optical pulses down a nonideal anomalously dispersive optical fiber, with the multiplicative noise term describing the effects of local density fluctuations in the fiber material. A similar equation has also been considered as a model of energy transfer in a monolayer molecular aggregate in the presence of thermal fluctuations (see \cite{Bang1994}, \cite{Bang1995}, \cite{Rasmussen1995}). Equation~\eqref{eq: Stochastic NLS} reduces to the deterministic NLS equation for $\beta=0$. It can be verified by a straightforward calculation that if $\psi_D(x,t)$ is a solution of the deterministic NLS equation, then $\psi(x,t) = \exp(i\beta W(t)) \psi_D(x,t)$ is a solution of \eqref{eq: Stochastic NLS} (see \cite{Elgin1993}). This in particular means that Equation~\eqref{eq: Stochastic NLS}, similar to its deterministic counterpart, also possesses solitonic solutions. By considering the real and imaginary parts of $\psi$, Equation~\eqref{eq: Stochastic NLS} can be rewritten as a system of coupled stochastic PDEs,

\begin{align}
\label{eq: Stochastic NLS in terms of q and p}
d_tq = -\bigg[ \frac{\partial^2 p}{\partial x^2} + \epsilon (q^2+p^2)p \bigg]\,dt - \beta p \circ dW(t), \qquad\quad d_tp = \bigg[ \frac{\partial^2 q}{\partial x^2} + \epsilon (q^2+p^2)q \bigg]\,dt + \beta q \circ dW(t),
\end{align}

\noindent
for real-valued functions $q=q(x,t)$ and $p=p(x,t)$, where $\psi=q+ip$. Equation~\eqref{eq: Stochastic NLS in terms of q and p} has the form of a stochastic Hamiltonian PDE, that is,

\begin{align}
\label{eq: Stochastic Hamiltonian PDE}
d_tq = \frac{\delta \mathcal{H}_0}{\delta p} \,dt + \frac{\delta \mathcal{H}_1}{\delta p}\circ dW(t), \qquad\quad d_tp = -\frac{\delta \mathcal{H}_0}{\delta q} \,dt - \frac{\delta \mathcal{H}_1}{\delta q}\circ dW(t),
\end{align}

\noindent
with the Hamiltonian functionals given by

\begin{align}
\label{eq: Hamiltonian functionals for the stochastic NLS}
\mathcal{H}_0[q,p] = \int \bigg[ \frac{1}{2}\bigg(\frac{\partial q}{\partial x} \bigg)^2 + \frac{1}{2}\bigg(\frac{\partial p}{\partial x} \bigg)^2 -\frac{\epsilon}{4}(q^2+p^2)^2\bigg]\,dx, \qquad\quad \mathcal{H}_1[q,p] = -\frac{\beta}{2}\int (q^2+p^2)\,dx.
\end{align}

\subsubsection{Semi-discretization}
\label{sec: Semi-discretization}

Suppose we would like to solve \eqref{eq: Stochastic NLS in terms of q and p} on the bounded spatial domain $[0,X_{max}]$ with periodic boundary conditions. Let us introduce a uniform spatial mesh consisting of the $N$ points $x_j=(j-1)\Delta x$ for $j=1,\ldots,N$, where $\Delta x = X_{max}/N$ is the mesh size, and let us denote $q^j(t)=q(x_j,t)$ and $p^j(t)=p(x_j,t)$. Using central differences to approximate the second derivatives in \eqref{eq: Stochastic NLS in terms of q and p}, we obtain the system of $2N$ stochastic differential equations

\begin{align}
\label{eq: Semi-discretization of the stochastic NLS}
d_tq^j &= -\bigg[ \frac{p^{j+1}-2p^j+p^{j-1}}{\Delta x^2} + \epsilon \Big((q^j)^2+(p^j)^2\Big)p^j \bigg]\,dt - \beta p^j \circ dW(t), \nonumber \\
d_tp^j &= \phantom{-}\bigg[ \frac{q^{j+1}-2q^j+q^{j-1}}{\Delta x^2} + \epsilon \Big((q^j)^2+(p^j)^2\Big)q^j \bigg]\,dt + \beta q^j \circ dW(t),
\end{align}

\noindent
for $j=1,\ldots,N$, with $q^0\equiv q^N$, $q^{N+1}\equiv q^1$, $p^0\equiv p^N$, and $p^{N+1}\equiv p^1$. Equation~\eqref{eq: Semi-discretization of the stochastic NLS} is a stochastic Hamiltonian system \eqref{eq: Stochastic Hamiltonian system - general} with the Hamiltonians

\begin{align}
\label{eq: Hamiltonians for the semi-discretization of NLS}
H = \sum_{j=1}^N\bigg[ \frac{1}{2}\bigg(\frac{q^{j+1}-q^j}{\Delta x} \bigg)^2 + \frac{1}{2}\bigg(\frac{p^{j+1}-p^j}{\Delta x} \bigg)^2 -\frac{\epsilon}{4}\Big((q^j)^2+(p^j)^2\Big)^2\bigg], \quad\quad\,\, h = -\frac{\beta}{2} \sum_{j=1}^N \Big((q^j)^2+(p^j)^2\Big),
\end{align}

\noindent
which are related to the discretizations of the Hamiltonian functionals \eqref{eq: Hamiltonian functionals for the stochastic NLS}. Since the noise is one-dimensional, for simplicity we write $h\equiv h_1$.  One can check that the condition \eqref{eq: Poisson bracket of the Hamiltonians} is satisfied, therefore the Hamiltonian $H$ is almost surely preserved on solutions to \eqref{eq: Semi-discretization of the stochastic NLS}. The system \eqref{eq: Semi-discretization of the stochastic NLS} can be integrated in time using general purpose stochastic methods (see Section~\ref{sec: POD Time integration}) or stochastic symplectic schemes (see Section~\ref{sec: PSD time integration}).

\subsubsection{Empirical data}
\label{sec: Empirical data}

Suppose we have the following computational problem: we would like to scan the domains of $\beta$ and~$\epsilon$, that is, compute the numerical solution of \eqref{eq: Semi-discretization of the stochastic NLS} for a single realization of the Wiener process for a large number of values of $\beta$ and $\epsilon$. Given that in practical applications the system \eqref{eq: Semi-discretization of the stochastic NLS} is high-dimensional, this task is computationally intensive. Model reduction can alleviate this substantial computational cost. One can carry out full-scale computations only for a selected number of values of $\beta$ and $\epsilon$. These data can then be used to identify reduced models, as described in Sections~\ref{sec: Model reduction for SDEs} and~\ref{sec: Model reduction for stochastic Hamiltonian systems}. The lower-dimensional equations \eqref{eq: POD reduced stochastic differential equation} or \eqref{eq: Partitioned reduced stochastic Hamiltonian system} can then be solved more efficiently for other values of $\beta$ and $\epsilon$, thus reducing the overall computational cost. Let us consider the initial conditions

\begin{align}
\label{eq: Initial conditions for NLS}
q^j(0) = \sqrt{2} \sech (x_j-x_c) \cos \frac{c}{2}(x_j-x_c), \qquad p^j(0) = \sqrt{2} \sech (x_j-x_c) \sin \frac{c}{2}(x_j-x_c),
\end{align}

\noindent
for $j=1,\ldots,N$. These initial conditions correspond to a soliton for \eqref{eq: Stochastic NLS} centered at $x=x_c$ and propagating with the speed $c$ in the case when $\epsilon=1$ (see \cite{SulemBook2007}, \cite{ZakharovManakov1974}). For our experiment, we calculated the numerical solution to the full model \eqref{eq: Semi-discretization of the stochastic NLS} for 24 pairs of (arbitrarily selected) values of the parameters $(\beta, \epsilon)$, with $\beta$ and $\epsilon$ taking the following values:

\begin{equation}
\label{eq: beta values}
\beta = \; 0.12, \; 0.14, \; 0.16, \; 0.18, \qquad\qquad \epsilon = \; 0.95, \; 0.97, \; 0.99, \; 1.01, \; 1.03, \; 1.05.
\end{equation}

\noindent
Computations were carried out for $N=256$ mesh points using the stochastic midpoint method \eqref{eq: Stochastic midpoint method} over the time interval $0\leq t \leq 200$ with the time step $\Delta t=0.01$. The remaining parameters were $X_{max}=60$, $\Delta x \approx 0.2344$, $c=1$, and $x_c=30$. The same sample path of the Wiener process was used for all simulations (i.e., the same seed for the random number generator was used when calculating a sample path of $W(t)$). The generated data for all values of $\beta$ and $\epsilon$ were put together and used to form the snapshot matrices \eqref{eq: Empirical data for POD} and \eqref{eq: Snapshots of the solution for cotangent lift}. For instance, for the POD snapshot matrix \eqref{eq: Empirical data for POD} we used

\begin{equation}
\label{eq: Snapshots of the solution in the experiment}
\Delta = [u(t_1;\beta_1, \epsilon_1) \,\, u(t_2;\beta_1, \epsilon_1) \,\, u(t_3;\beta_1, \epsilon_1) \,\, \ldots \,\, u(t_1;\beta_1, \epsilon_2) \,\, u(t_2;\beta_1, \epsilon_2)\,\, u(t_3;\beta_1, \epsilon_2) \,\, \ldots].
\end{equation}

\noindent
Then, following the description of the methods in Sections~\ref{sec: Model reduction for SDEs} and~\ref{sec: Model reduction for stochastic Hamiltonian systems}, reduced models \eqref{eq: POD reduced stochastic differential equation} and \eqref{eq: Reduced stochastic Hamiltonian system}  were derived. Note that the drift term in \eqref{eq: Semi-discretization of the stochastic NLS} has a nonlinear part, therefore its DEIM approximation \eqref{eq: DEIM approximation of f_N} was also constructed. The decay of the singular values for the POD and PSD reductions, and for the DEIM approximation of the nonlinear term is depicted in Figure~\ref{fig: Singular values for NLS}.   In general, a sufficiently fast decay of the singular values indicates that it is possible to obtain an accurate low-rank approximation of the snapshot matrix \eqref{eq: SVD of Delta} for a small value of $k$, which is a necessary condition for the success of reduced model simulations. Some problems are more amenable to model reduction than others (see, e.g., \cite{BuffaMaday2012}, \cite{Ohlberger2016}, \cite{PinkusBook1985}). The decay in Figure~\ref{fig: Singular values for NLS} is not particularly fast (compare with Figure~\ref{fig: Singular values for Kubo} and Figure~\ref{fig: Singular values for forced Kubo}), which means that perhaps one should not expect a very significant reduction of the dimension. This is typical behavior for wave-like phenomena and transport problems (see \cite{GreifUrban2019}, \cite{Ohlberger2016}), for which more advanced model reduction techniques have been developed, such as online adaptive methods (\cite{CarlbergAdaptive2015}, \cite{Peherstorfer2015}), shifted PODs (\cite{Reiss2018}), or nonlinear manifold reduction methods (\cite{RimPeherstorfer2023}). Nevertheless, the main purpose of our experiment is to compare the performance of POD and PSD reductions for stochastic systems, even if the level of reduction is not high. In addition, as described below in Section~\ref{sec: Reduced model simulations for NLS}, in this particular case it is still possible to obtain computational advantage over full-model simulations. For other applications of POD and PSD to deterministic systems possessing wave-like solutions see, e.g., \cite{AfkhamHesthaven2017}, \cite{AfkhamHesthaven2018}, \cite{AfkhamHesthaven2019}, \cite{Karasozen2018}, \cite{PengMohseni2016}.  

\begin{figure}[tbp]
	\centering
		\includegraphics[width=.8\textwidth]{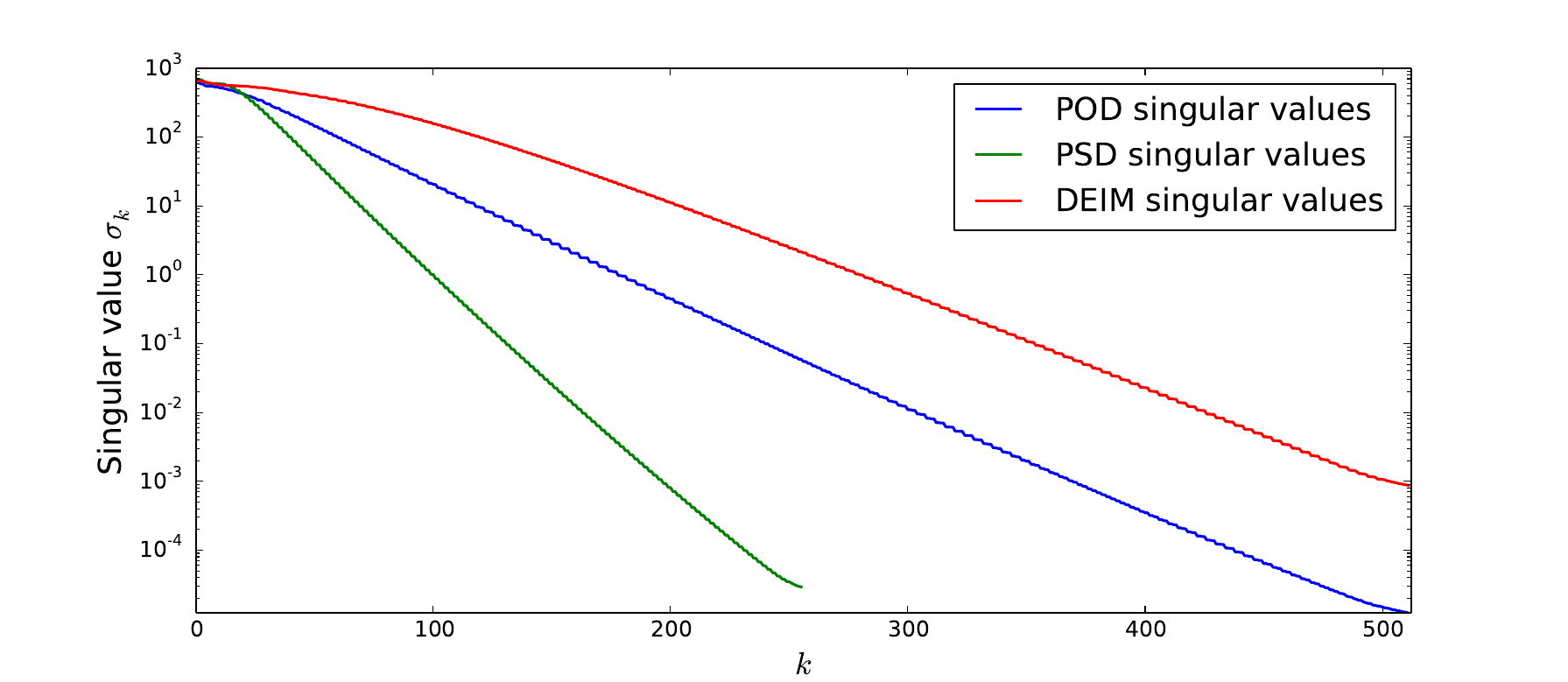}
		\caption{ The decay of the singular values for the POD and PSD reductions, and for the DEIM approximation of the nonlinear term for the empirical data ensemble for the stochastic NLS equation. }
		\label{fig: Singular values for NLS}
\end{figure}

\subsubsection{Reduced model simulations}
\label{sec: Reduced model simulations for NLS}

\begin{figure}[tbp]
	\centering
		\includegraphics[width=\textwidth]{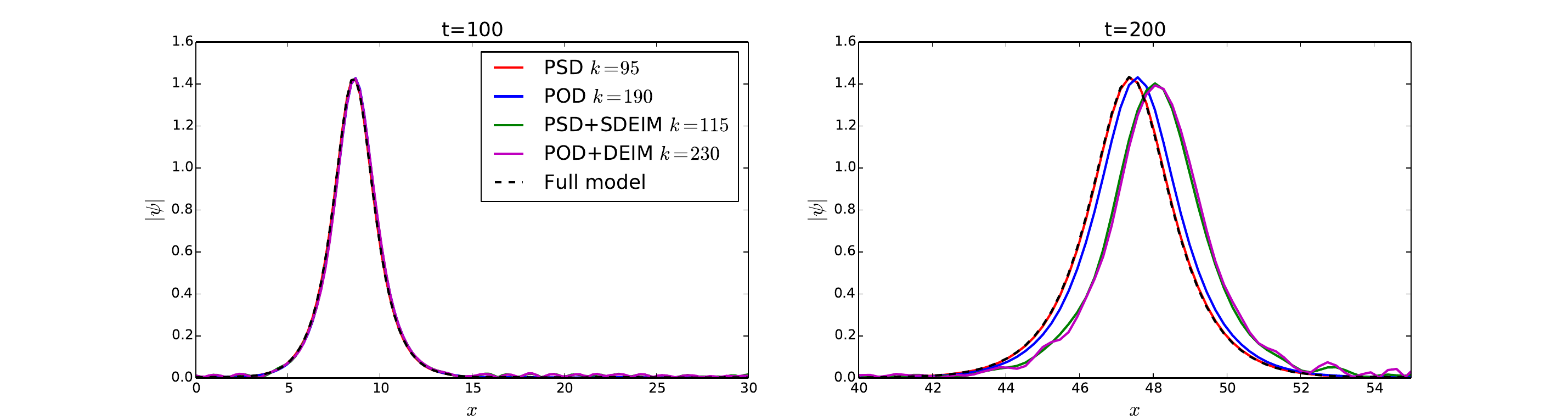}
		\caption{ The solution $|\psi(x,t)|=|q(x,t)+ip(x,t)|$ of the stochastic NLS equation with the parameters $\beta=0.15$ and $\epsilon=1$ at times $t=100$ (\emph{Left}) and $t=200$ (\emph{Right}) obtained with the help of the reduced models integrated with the stochastic midpoint method using the time step $\Delta t=0.01$. While at $t=100$ all simulations resolve the soliton relatively well, at $t=200$ the PSD models yield a more accurate solution than the POD models of the same dimension. The POD+DEIM and PSD+SDEIM simulations capture the propagation of the soliton, but create some spurious oscillations in its tail.}
		\label{fig: NLS Solution---Midpoint}
\end{figure}

\begin{figure}[tbp]
	\centering
		\includegraphics[width=\textwidth]{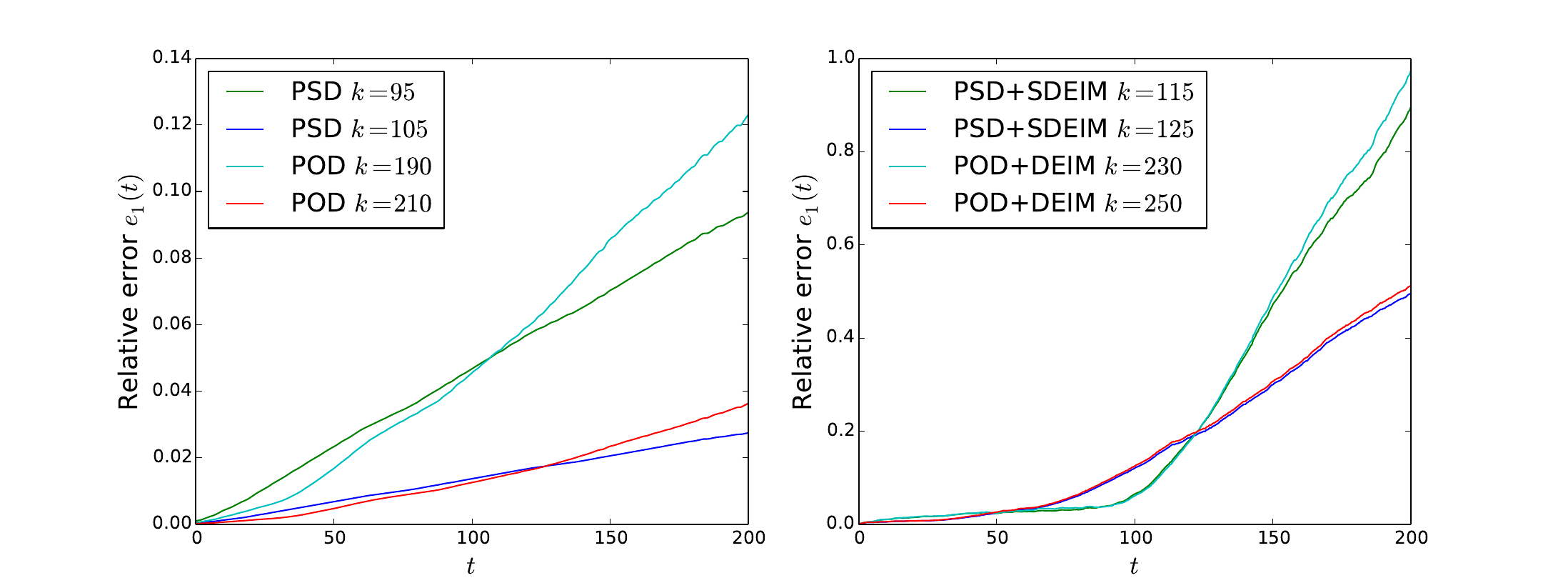}
		\caption{ The relative error $e_1(t)$ as a function of time for several example reduced model simulations of the stochastic NLS equation using the stochastic midpoint method with the time step $\Delta t=0.01$.}
		\label{fig: NLS Instant Error---Midpoint}
\end{figure}

\begin{figure}[tbp]
	\centering
		\includegraphics[width=\textwidth]{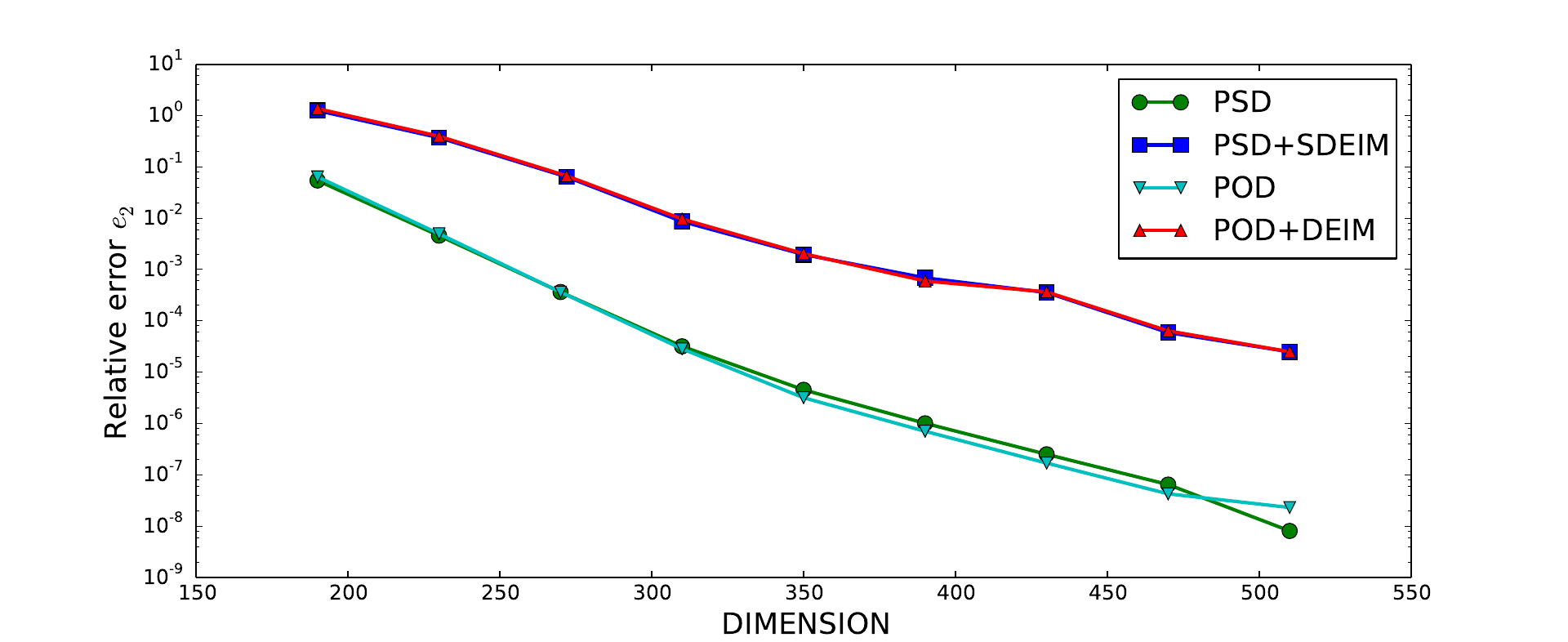}
		\caption{ The relative error $e_2$ for the reduced model simulations of the stochastic NLS equation using the stochastic midpoint method with the time step $\Delta t=0.01$ is depicted as a function of the dimension of the reduced system. Recall that the dimension of the reduced model is equal to $k$ for the POD methods, and $2k$ for the PSD methods.}
		\label{fig: NLS Global Error---Midpoint}
\end{figure}

\begin{figure}[tbp]
	\centering
		\includegraphics[width=\textwidth]{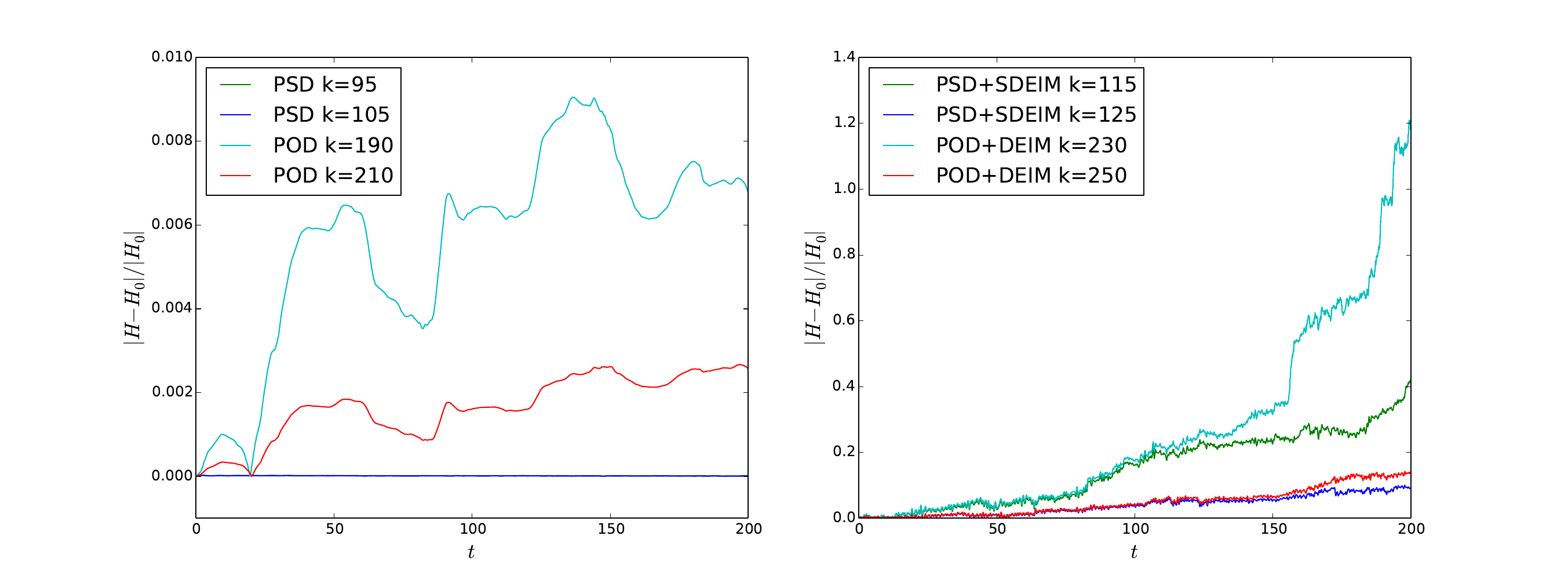}
		\caption{ The relative error of the Hamiltonian as a function of time for several example reduced model simulations of the stochastic NLS equation using the stochastic midpoint method with the time step $\Delta t=0.01$ is depicted, where $H_0$ denotes the initial value of the Hamiltonian. The PSD simulations preserve the Hamiltonian nearly exactly, in contrast to the POD simulations. Although the PSD reduced models in combination with the SDEIM approximation do not show such a good behavior, they still preserve the Hamiltonian better than the POD models of the same dimension in combination with the DEIM approximation. Note that the plots for the PSD method with $k=95$ and $k=105$ overlap very closely and are therefore indistinguishable.  }
		\label{fig: NLS Energy---Midpoint}
\end{figure}

\begin{figure}[tbp]
	\centering
		\includegraphics[width=\textwidth]{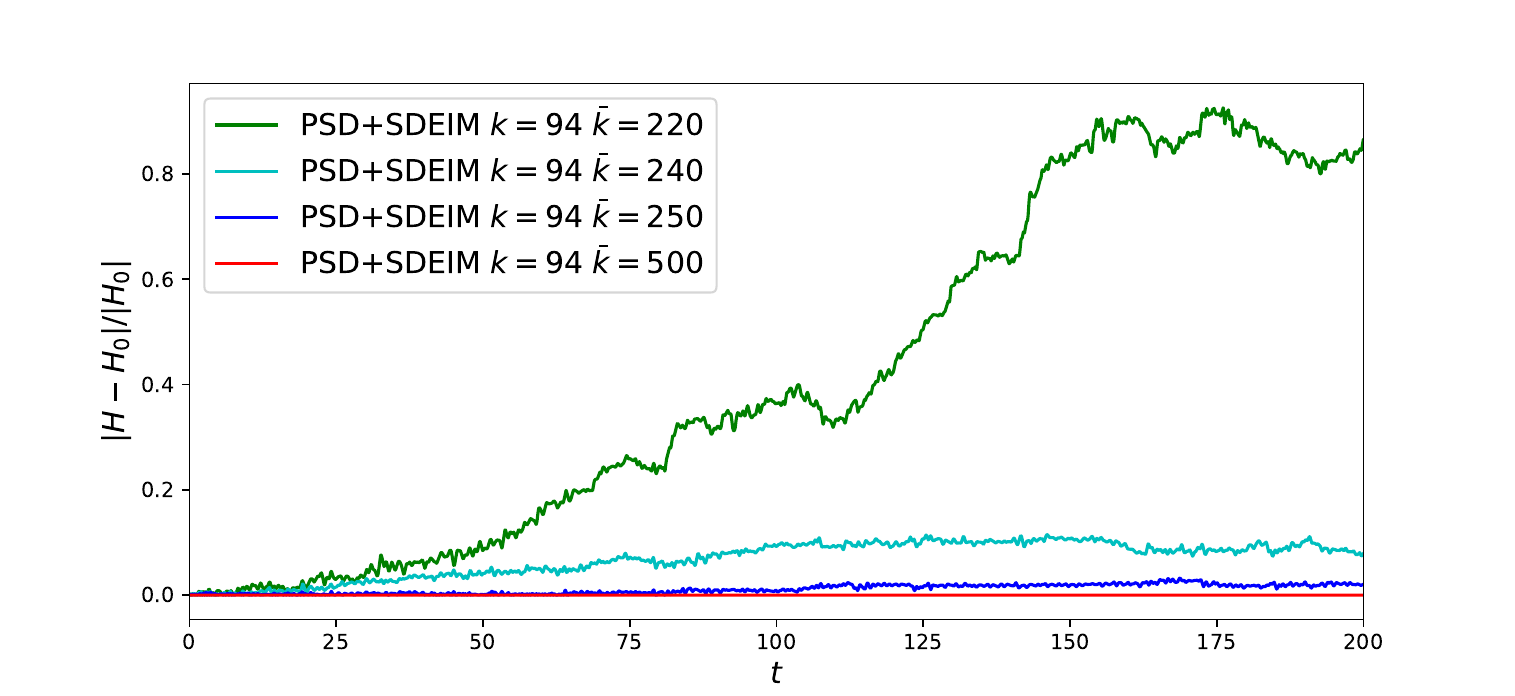}
		\caption{ The relative error of the Hamiltonian as a function of time for several example PSD+SDEIM reduced model simulations of the stochastic NLS equation using the stochastic midpoint method with the time step $\Delta t=0.01$ is depicted, where $H_0$ denotes the initial value of the Hamiltonian. A fixed dimension $2k=188$ of the reduced system is used, but the number $\bar k$ of the calculated components of the nonlinear term in the SDEIM approximation is varied. We can see that the larger the number $\bar k$ is, the less the numerical evolution of the Hamiltonian deviates from the initial value, in line with the prediction based on Theorem~\ref{thm: Theorem on dE}.}
		\label{fig: NLS Energy---SDEIM convergence}
\end{figure}

The reduced models obtained in Section~\ref{sec: Empirical data} from the empirical data can now be solved for any other desired values of the parameters $\beta$ and $\epsilon$.  To test the accuracy of the considered model reduction methods, we have compared the results of the reduced model simulations to a full-scale reference solution for a single arbitrary choice of the values of the parameters.  The reference solution for $\beta=0.15$ and $\epsilon=1$ was calculated on the time interval $0\leq t \leq 200$ in the same way as the empirical data in Section~\ref{sec: Empirical data}. Note that for this choice of $\epsilon$ the reference solution is a soliton. The reduced models were solved numerically on the same time interval using the stochastic midpoint, $R2$, and Heun methods. Note that when applied to a Hamiltonian system, the stochastic midpoint method is a symplectic integrator, while the $R2$ and Heun methods are not. The approximations of the nonlinear term were carried out with $\bar k = k$ for the POD+DEIM simulations (i.e., the simulations of the POD reduced models combined with the DEIM approximation),  and with $\bar k = 2k$ for the PSD+SDEIM simulations (i.e., the simulations of the PSD reduced models combined with the SDEIM approximation),  where $k$ is the number of the modes kept in the SVD decompositions of \eqref{eq: Empirical data for POD} and \eqref{eq: Snapshots of the solution for cotangent lift}, respectively. This way the number of the calculated components of the nonlinear term was commensurate with the dimension of the reduced system. The choice of $k$ is a compromise between the speed and the accuracy: the smaller $k$ the faster the computation, but also the larger the projection error. In practice, one may choose $k$ based on the initial value of the error \eqref{eq: Relative error at time t for NLS}, i.e., the value of the projection error for the initial condition. For instance, in our experiment, the initial relative error for the POD simulations was equal to $5\cdot 10^{-4}$ for $k=190$, and $2.2\cdot 10^{-4}$ for $k=210$ (see also Figure~\ref{fig: NLS Instant Error---Midpoint}). 

The simulations using the stochastic midpoint method were carried out with the time step $\Delta t = 0.01$. The numerical solution for $|\psi(x,t)|$ obtained from the reduced models at times $t=100$ and $t=200$ is compared against the reference solution in Figure~\ref{fig: NLS Solution---Midpoint}. We can see that while at $t=100$ all simulations resolve the soliton relatively well, after a longer time the PSD models yield a more accurate solution than the POD models of the same dimension. We also see that both the POD+DEIM and PSD+SDEIM simulations capture the propagation of the soliton, but create some spurious oscillations in its tail. As a measure of accuracy of the reduced systems at time $t$ we take the relative $L^2([0,60])$ error, that is,

\begin{equation}
\label{eq: Relative error at time t for NLS}
e_1(t)=\frac{\| \psi(\cdot,t)-\psi_{\text{ref}}(\cdot,t)\|_1}{\| \psi_{\text{ref}}(\cdot,t)\|_1},
\end{equation}

\noindent
where $\psi_{\text{ref}}$ is the reference full-model solution, as described above, and $\| \psi(\cdot,t)\|_1 = \sqrt{\int_0^{60} |\psi(x,t)|^2\,dx}$. The relative error $e_1(t)$ for several example reduced model simulations is depicted in Figure~\ref{fig: NLS Instant Error---Midpoint}. The accuracy of the reduced model simulations is improved, as the number of modes $k$ is increased. The convergence of the solutions of the reduced systems to the full-model solution is depicted in Figure~\ref{fig: NLS Global Error---Midpoint}, where the relative error $e_2$ is defined as

\begin{equation}
\label{eq: Global relative error for NLS}
e_2=\frac{\| \psi-\psi_{\text{ref}}\|_2}{\| \psi_{\text{ref}}\|_2},
\end{equation}

\noindent 
and $\| \psi\|_2 = \sqrt{\int_0^{200}\int_0^{60} |\psi(x,t)|^2\,dxdt}$ is the $L^2([0,60]\times[0,200])$ norm. A clear advantage of using the PSD method in combination with symplectic integration in time is the preservation of the Hamiltonian $H$ in long-time simulations. The relative error of the Hamiltonian for several example reduced model simulations is depicted in Figure~\ref{fig: NLS Energy---Midpoint}. We can see that, in contrast to the POD simulations, the PSD simulations nearly exactly preserve the Hamiltonian. As pointed out in Section~\ref{sec: Symplectic Discrete Empirical Interpolation Method}, the PSD method in combination with the SDEIM approximation does not result in a Hamiltonian system. Nevertheless, after a long integration time the PSD+SDEIM reduced models preserve the Hamiltonian better than the POD+DEIM reduced models of the same dimension. In order to further investigate the behavior of the SDEIM approximation in the stochastic setting, and to provide a numerical validation of Theorem~\ref{thm: Theorem on dE}, additional simulations for a fixed $k=94$ and varying $\bar k$ were carried out. The results presented in Figure~\ref{fig: NLS Energy---SDEIM convergence} indicate that the larger the number $\bar k$ is, the better the reduced system \eqref{eq: PSD+SDEIM reduced system} preserves the Hamiltonian of the system \eqref{eq: Reduced stochastic Hamiltonian system}.   It is also worth noting that despite the slow decay of the singular values in Figure~\ref{fig: Singular values for NLS} and the relatively high dimension of the reduced system needed to obtain satisfactory accuracy (see Figure~\ref{fig: NLS Global Error---Midpoint}), it is still possible to achieve reasonable computational speedup compared to the full model simulations. The average runtimes of the POD and PSD reduced simulations using the stochastic midpoint method for several dimensions of the reduced system are summarized in Table~\ref{tab:Runtimes for the POD and PSD simulations for NLS}.  

\begin{table*}
	\centering
		\begin{tabular}{c|c|c}
		  \textbf{dim} & \textbf{POD} & \textbf{PSD}\\
			\hline
			190 & 7m:32.4s \emph{(speedup 50.5\%)}  & 2m:50s \emph{(speedup 81.4\%)} \\
			210 & 9m:22.3s \emph{(speedup 38.4\%)}  & 3m:29.3s \emph{(speedup 77.1\%)} \\
			230 & 11m:23.8s \emph{(speedup 25.1\%)} & 4m:16.4s \emph{(speedup 71.9\%)} \\
			250 & 13m:05.3s \emph{(speedup 14\%)} & 5m:00.5s \emph{(speedup 67.1\%)} \\
		\end{tabular}
	\caption{Runtimes of the POD and PSD reduced simulations of the stochastic NLS equation using the midpoint method (averaged over 5 runs) for different values of the dimension of the reduced system (the dimension is equal to $k$ for POD reduced models, and $2k$ for PSD reduced models). The simulation of the full model (of dimension $2N=512$) with the midpoint method took 15m:13.2s (averaged over 5 runs).  }
	\label{tab:Runtimes for the POD and PSD simulations for NLS}
\end{table*}

\begin{figure}[tbp]
	\centering
		\includegraphics[width=\textwidth]{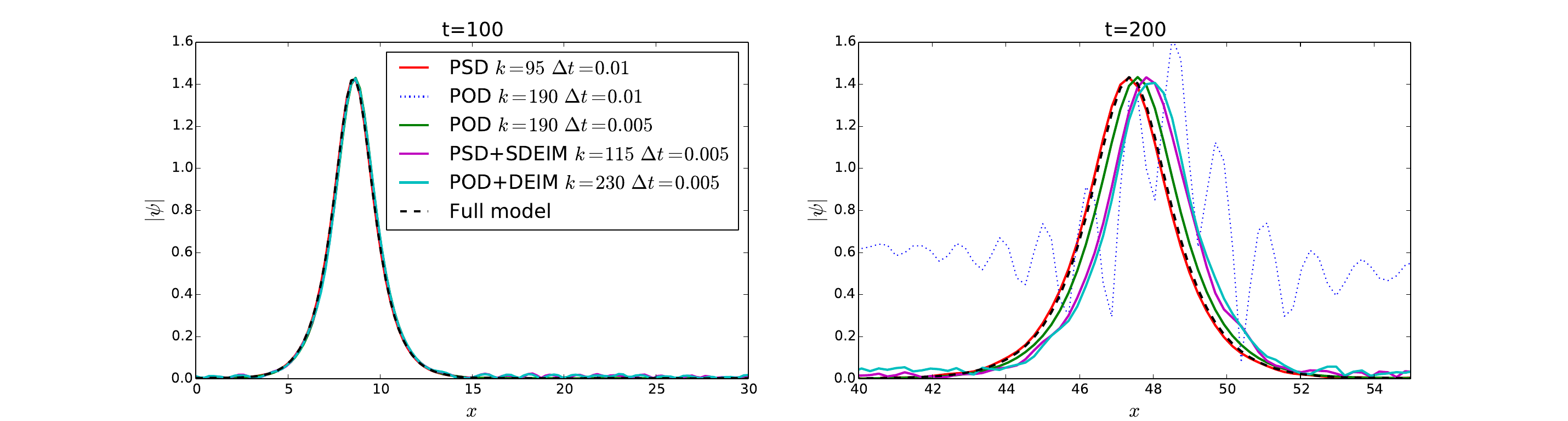}
		\caption{ The solution $|\psi(x,t)|=|q(x,t)+ip(x,t)|$ of the stochastic NLS equation with the parameters $\beta=0.15$ and $\epsilon=1$ at times $t=100$ (\emph{Left}) and $t=200$ (\emph{Right}) obtained with the help of the reduced models integrated with the stochastic $R2$ method. While at $t=100$ all simulations resolve the soliton relatively well, at $t=200$ the PSD models yield a more accurate solution than the POD models of the same dimension and integrated with the same time step. The POD simulation with $k=190$ and $\Delta t=0.01$ eventually becomes unstable. The POD+DEIM and PSD+SDEIM simulations capture the propagation of the soliton, but create some spurious oscillations in its tail.}
		\label{fig: NLS Solution---R2}
\end{figure}

\begin{figure}[tbp]
	\centering
		\includegraphics[width=\textwidth]{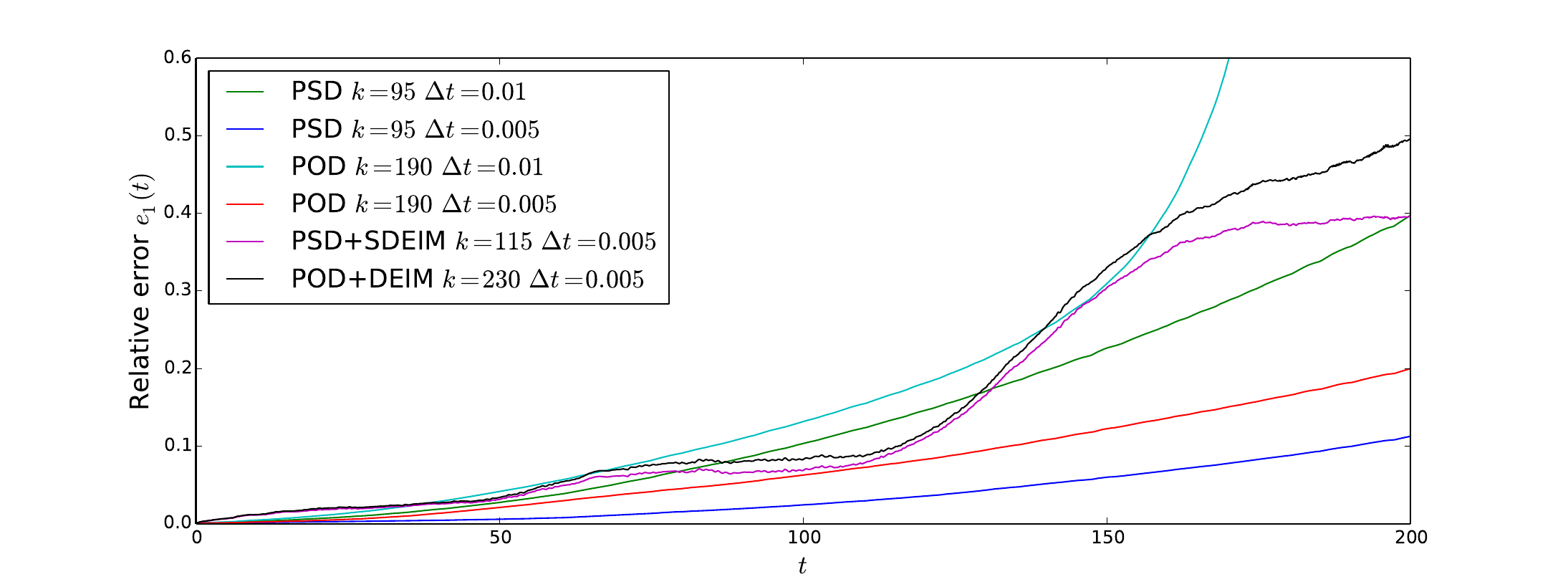}
		\caption{ The relative error $e_1(t)$ as a function of time for several example reduced model simulations of the stochastic NLS equation using the stochastic $R2$ method. Note that the error for the POD method with $k=190$ and $\Delta t = 0.01$ blows up.}
		\label{fig: NLS Instant Error---R2}
\end{figure}

\begin{figure}[tbp]
	\centering
		\includegraphics[width=\textwidth]{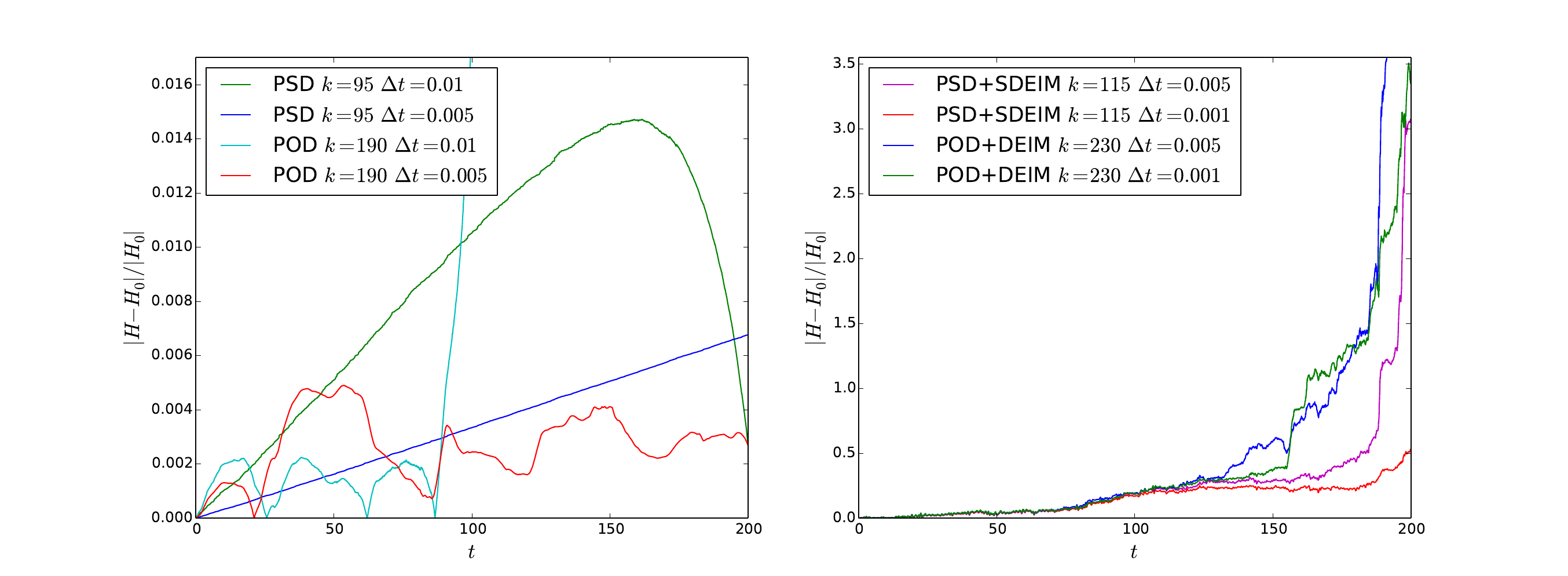}
		\caption{ The relative error of the Hamiltonian as a function of time for several example reduced model simulations of the stochastic NLS equation using the stochastic $R2$ method is depicted, where $H_0$ denotes the initial value of the Hamiltonian. Note that the Hamiltonian for the POD method with $k=190$ and $\Delta t = 0.01$ blows up.  }
		\label{fig: NLS Energy---R2}
\end{figure}

The advantage of maintaining the Hamiltonian structure in constructing a reduced model is evident even when explicit non-symplectic schemes are used to integrate the reduced equations. The simulations using the stochastic $R2$ method were carried out with the time steps $\Delta t = 0.01$, $\Delta t = 0.005$, and $\Delta t = 0.001$. Generally, shorter time steps were needed in order to maintain the stability of the numerical solution over the integration time. The numerical solution for $|\psi(x,t)|$ obtained from the reduced models at times $t=100$ and $t=200$ is compared against the reference solution in Figure~\ref{fig: NLS Solution---R2}. One can see that the PSD method with the time step $\Delta t = 0.01$ was able to produce an accurate solution up to the final time $t=200$, while the POD method with the same time step eventually became unstable. The relative error $e_1(t)$ is depicted in Figure~\ref{fig: NLS Instant Error---R2} and the relative error of the Hamiltonian is shown in Figure~\ref{fig: NLS Energy---R2}. The $R2$ method is not symplectic, therefore good preservation of the Hamiltonian is not expected, but also in this case we have observed that the PSD models produce more accurate solutions than the POD models of the same dimension and integrated with the same time step. The simulations using the stochastic Heun method yielded nearly identical results as the $R2$ method, therefore for brevity and clarity we skip presenting separate figures.

\subsection{Kubo oscillator}
\label{sec: Kubo oscillator}

The Kubo oscillator is a stochastic Hamiltonian system driven by a one-dimensional Wiener process  with the Hamiltonians given by $H(q,p)=p^2/2+q^2/2$ and $h(q,p)=\beta(p^2/2+q^2/2)$, where $\beta$ is the noise intensity (see \cite{MilsteinRepin}), and since the noise is one-dimensional, for simplicity we write $h\equiv h_1$.  It is an example of an oscillator with a fluctuating frequency and it was first introduced in the context of the line-shape theory (see \cite{Anderson1954}, \cite{Kubo1954}), but later also found many other applications in connection with mechanical systems, turbulence, laser theory, wave propagation (see \cite{VanKampen1976} and the references therein), magnetic resonance spectroscopy, nonlinear spectroscopy (see \cite{MukamelBook1995} and the references therein), single molecule spectroscopy (\cite{Jung2003}), and stochastic resonance (\cite{ChaudhuriMicroscopic2009}, \cite{Chaudhuri2010}, \cite{ChaudhuriNonequilibrium2009}, \cite{Gitterman2004}). The Kubo oscillator serves as a prototype for multiplicative stochastic processes, and since its solutions can be calculated analytically, it is often used for validation of numerical algorithms (see, e.g., \cite{Fox1987}, \cite{MaDing2015}, \cite{MilsteinRepin}, \cite{SunWang2016}). It is straightforward to verify that the exact solution is given by  

\begin{equation}
\label{eq:Kubo oscillator---exact solution}
q_e(t)=p_0 \sin(t+\beta W(t)) + q_0 \cos(t+\beta W(t)), \qquad\quad p_e(t)=p_0 \cos(t+\beta W(t)) - q_0 \sin(t+\beta W(t)),
\end{equation}

\noindent
where $q_0$ and $p_0$ are the initial conditions. Note that \eqref{eq:Kubo oscillator---exact solution} is the solution of the deterministic harmonic oscillator equation with the time argument shifted by $\beta W(t)$. It is also clearly evident that the Hamiltonian $H$ is preserved along the solution \eqref{eq:Kubo oscillator---exact solution}. Since $W(t)\sim N(0,t)$ is normally distributed, one can explicitly calculate the mean position and momentum as, respectively,

\begin{equation}
\label{eq:Kubo oscillator---exact mean solution}
\mathbb{E}[q_e(t)]=e^{-\frac{\beta^2}{2} t}(p_0 \sin t + q_0 \cos t), \qquad\quad \mathbb{E}[p_e(t)]=e^{-\frac{\beta^2}{2} t}(p_0 \cos t - q_0 \sin t).
\end{equation}

\subsubsection{Empirical data}
\label{sec: Empirical data for Kubo}

Suppose we have the following computational problem: we would like to calculate the expected value of the solution to \eqref{eq: Stochastic Hamiltonian system - general} for a large number of parameters $\beta$. In order to accurately approximate the mean value of a stochastic process one typically needs thousands, or even millions Monte Carlo runs. This presents a computational challenge which can be alleviated by model reduction. One can carry out full-scale computations only for a selected number of values of $\beta$, and the resulting data can be used to identify a reduced model, as described in Section~\ref{sec: Reduction of the number of Monte Carlo runs}. The lower dimensional equations can then be solved for other values of $\beta$. In our experiment we have considered the initial conditions $q_0=0$ and $p_0=1$, and we have used the exact solution \eqref{eq:Kubo oscillator---exact solution} to generate the empirical data for the following six (arbitrarily selected)  values of the parameter $\beta$:

\begin{equation}
\label{eq: beta values for Kubo}
\beta = \; 0.00095, \; 0.00097, \; 0.00099, \; 0.00101, \; 0.00103, \; 0.00105.
\end{equation}

\begin{figure}[tbp]
	\centering
		\includegraphics[width=.8\textwidth]{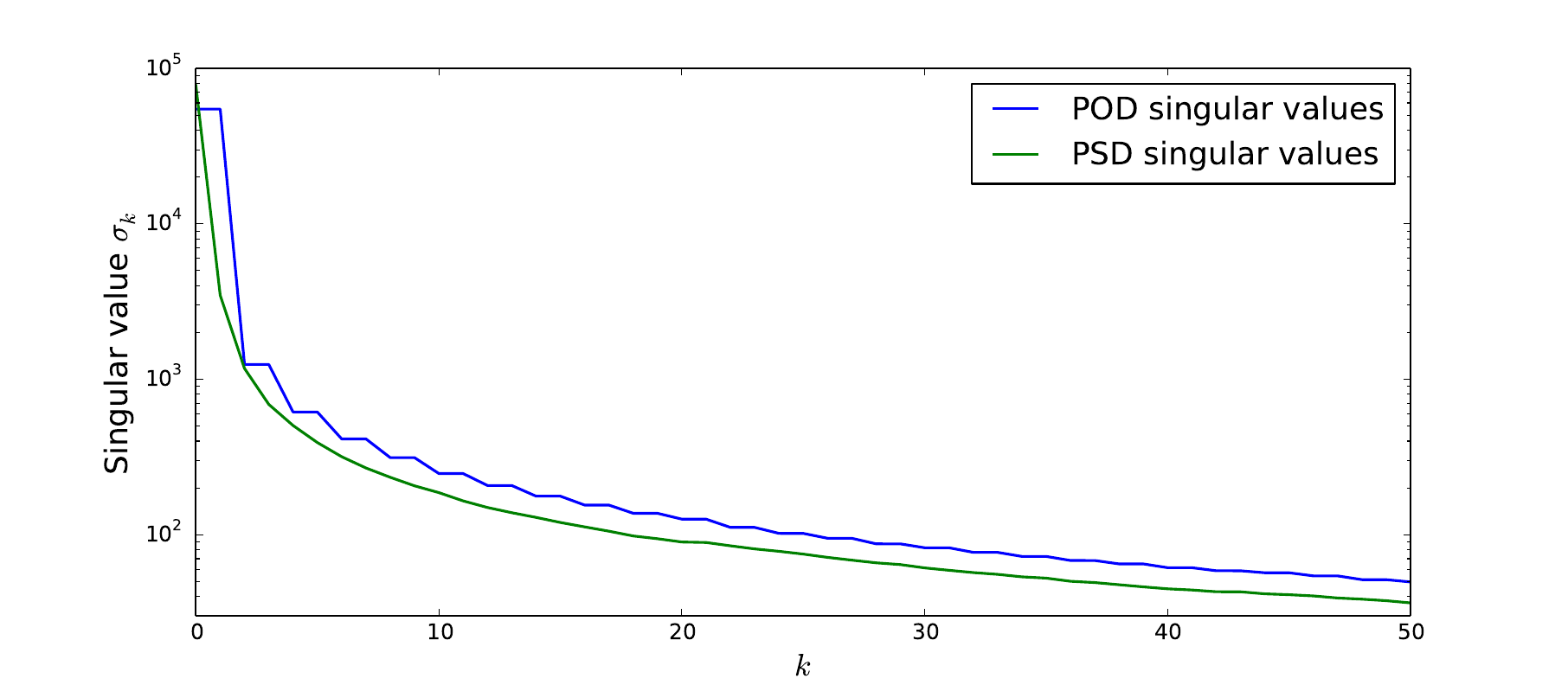}
		\caption{ The decay of the singular values for the POD and PSD reductions for the empirical data ensemble for the Kubo oscillator. Because of their large number and rapid decay, for clarity only the first 50 singular values are depicted. }
		\label{fig: Singular values for Kubo}
\end{figure}

\noindent
The exact solution \eqref{eq:Kubo oscillator---exact solution} was sampled over the time interval $0\leq t \leq 5000$ with the time step $\Delta t = 0.05$ for $M=10000$ independent realizations of the Wiener process, $W(t;\omega_1),\ldots,W(t;\omega_M)$, where $\omega_\nu$ denote elementary events in the probability space. Of course in the general situation the empirical data are generated using high-fidelity numerical methods applied to the full model, like in Section~\ref{sec: Stochastic Nonlinear Schrodinger Equation}, but our experiment was meant as a validation of the model reduction method, therefore the exact solution was used for convenience. Then, following the description of each algorithm in Section~\ref{sec: Reduction of the number of Monte Carlo runs}, reduced models were derived. The single realization of the $M$-dimensional Wiener process in \eqref{eq: System of SDEs for Monte Carlo} and \eqref{eq: System of stochastic Hamiltonian systems for Monte Carlo} was defined as $W^\nu(t)=W(t;\omega_\nu)$ for $\nu=1,\ldots,M$. Note that for the Kubo oscillator the drift and diffusion terms are linear, therefore no DEIM approximations were necessary. The decay of the singular values for the POD and PSD methods is depicted in Figure~\ref{fig: Singular values for Kubo}.

\subsubsection{Reduced model simulations}
\label{sec: Reduced model simulations for Kubo}

The reduced models obtained in Section~\ref{sec: Empirical data for Kubo} from the empirical data can now be solved for any other desired value of the parameter $\beta$.  To test the accuracy of the considered model reduction methods, we have compared the results of the reduced model simulations to a full-scale reference solution for a single arbitrary choice of the parameter.  The reference solution for $\beta=0.001$ was generated on the time interval $0 \leq t \leq 5000$ in the same way as the empirical data in Section~\ref{sec: Empirical data for Kubo}. The POD reduced model was solved using the stochastic R2 and Heun methods for $k=42$ (thus reducing the dimensionality of the system \eqref{eq: System of SDEs for Monte Carlo} from $2M=20000$ to 42). The PSD reduced model was solved using the stochastic St\"{o}rmer-Verlet method \eqref{eq:Stochastic Stormer-Verlet method} for $k=21$ (thus reducing the dimensionality of the system \eqref{eq: System of stochastic Hamiltonian systems for Monte Carlo} from $2M=20000$ to $2k=42$). Note that the Hamiltonians for the Kubo oscillator are separable, therefore the St\"{o}rmer-Verlet method is in this case explicit.

\begin{figure}[tbp]
	\centering
		\includegraphics[width=\textwidth]{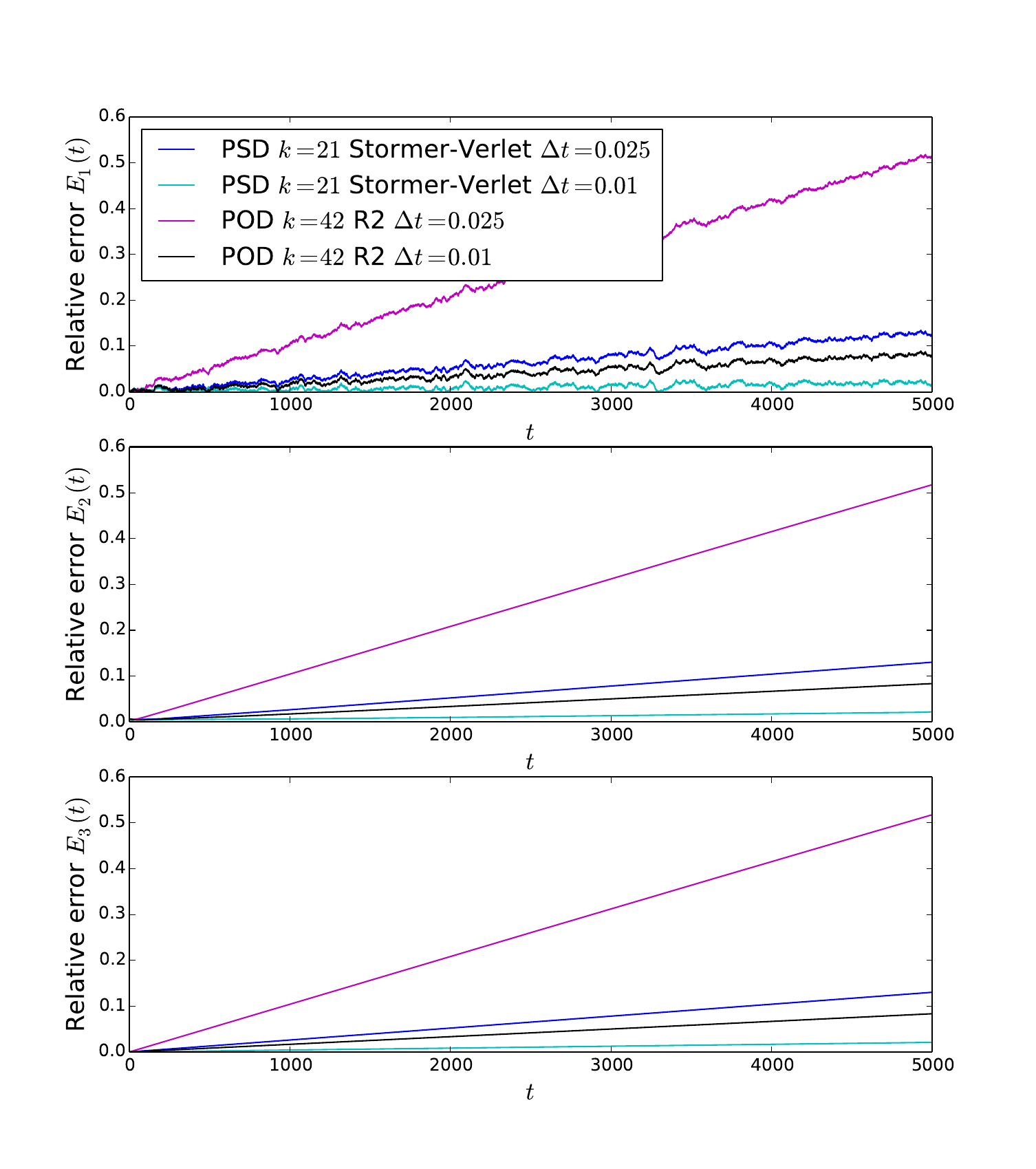} 
		\caption{ The relative errors $E_1(t)$ (\emph{Top}), $E_2(t)$ (\emph{Middle}), and $E_3(t)$ (\emph{Bottom}) for the reduced model simulations of the Kubo oscillator are depicted as functions of time. The PSD method combined with the stochastic St\"{o}rmer-Verlet scheme yields more accurate solutions than the POD method combined with the $R2$ integrator and using the same time step. The POD simulations with the Heun method yield nearly identical results as the $R2$ method, therefore separate plots are omitted for clarity. }
		\label{fig: Solution errors for Kubo}
\end{figure}

\begin{figure}[tbp]
	\centering
		\includegraphics[width=\textwidth]{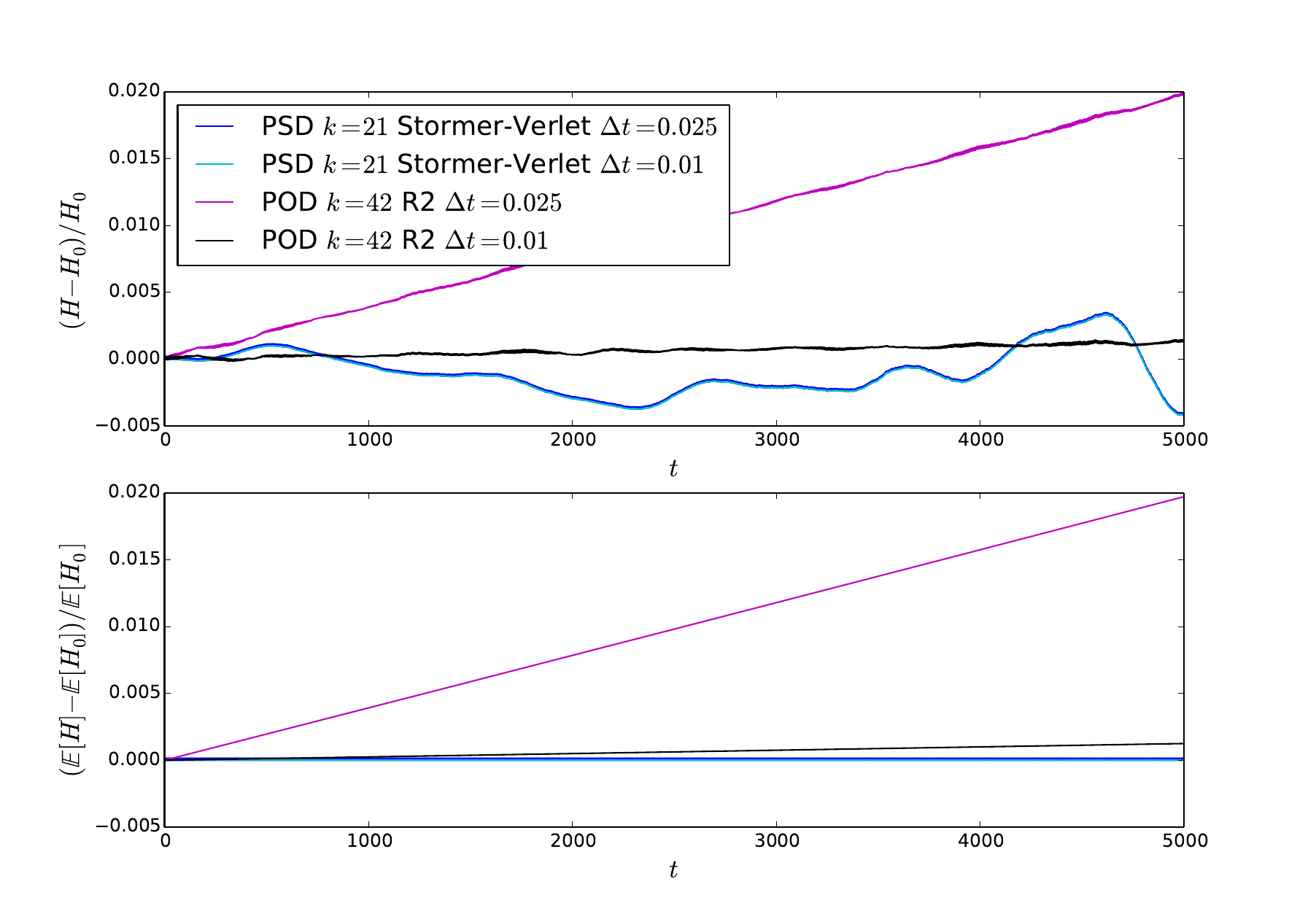} 
		\caption{ The relative errors of the Hamiltonian $H$ for a selected sample path (\emph{Top}) and of the mean Hamiltonian $\mathbb{E}[H]$ (\emph{Bottom}) for the reduced model simulations of the Kubo oscillator are depicted as functions of time, where $H_0$ denotes the initial value of the Hamiltonian. The PSD simulations using the St\"{o}rmer-Verlet method nearly exactly preserve the mean Hamiltonian, while the POD simulations using the $R2$ and Heun methods demonstrate a linear growth trend. Note that the plots for the PSD simulations with the time steps $\Delta t =0.025$ and $\Delta t = 0.01$ overlap very closely and are therefore barely distinguishable. The POD simulations with the Heun method yield nearly identical results as the $R2$ method, therefore separate plots are omitted for clarity. }
		\label{fig: Energy errors for Kubo}
\end{figure}

As a measure of accuracy of the reduced models we considered three types of error. First, we investigated the relative error of a selected single sample path, that is,

\begin{equation}
\label{eq: Relative error for a single path}
E_1(t) = \frac{\| u(t)-u_e(t) \|}{\| u_e(t)\|},
\end{equation}

\noindent
where $u_e=(q_e,p_e)$ and $u=(q,p)$ denote single sample paths of the exact reference solution and of the solution reconstructed from the reduced model, respectively, and $\|\cdot\|$ is the Euclidean norm on~$\mathbb{R}^2$. Second, we studied the relative mean-square error, that is,

\begin{equation}
\label{eq: Relative mean-square error}
E_2(t) = \sqrt{\frac{\mathbb{E}\big[ \| u(t)-u_e(t) \|^2 \big]}{\mathbb{E}\big[\| u_e(t)\|^2\big]}}.
\end{equation}

\noindent
Finally, we used the relative error of the mean (also known as the weak error), that is,

\begin{equation}
\label{eq: Relative weak error}
E_3(t) = \frac{\Big\| \mathbb{E}[u(t)]-\mathbb{E}[u_e(t)] \Big\|}{\Big\| \mathbb{E}[u_e(t)]\Big\|}.
\end{equation}

\noindent
All errors are depicted in Figure~\ref{fig: Solution errors for Kubo}. We can see that the reduced model simulations indeed yield good approximations of the exact solution. We also see that the PSD method combined with the stochastic St\"{o}rmer-Verlet scheme yields more accurate solutions than the POD method combined with the $R2$ and Heun integrators. The numerical values of the Hamiltonian $H$ for a selected single sample path and of the mean Hamiltonian $\mathbb{E}[H]$ as functions of time are depicted in Figure~\ref{fig: Energy errors for Kubo}. The PSD simulations using the St\"{o}rmer-Verlet method nearly exactly preserve the mean Hamiltonian, while the POD simulations using the $R2$ and Heun methods demonstrate a linear growth trend.

\subsection{Forced Kubo oscillator}
\label{sec: Forced Kubo oscillator}

In this experiment we consider the Kubo oscillator described, as in Section~\ref{sec: Kubo oscillator}, by the Hamiltonians $H(q,p)=p^2/2+q^2/2$ and $h(q,p)=\beta(p^2/2+q^2/2)$, subject to external damping, with the forcing terms given by $F(q,p)=-\nu p$ and $f(q,p)=-\beta \nu p$, where $\nu$ is the damping coefficient. Since the noise is one-dimensional, for simplicity we write $h\equiv h_1$ and $f\equiv f_1$. It is straightforward to verify that the exact solution is given by  

\begin{align}
\label{eq:Damped Kubo oscillator---exact solution}
q_e(t)&= q_0 e^{-\frac{\nu}{2}(t+\beta W(t))} \cos \omega (t+\beta W(t)) + \frac{1}{\omega}(p_0+\frac{\nu}{2} q_0) e^{-\frac{\nu}{2}(t+\beta W(t))} \sin \omega (t+\beta W(t)), \nonumber \\
p_e(t)&= p_0 e^{-\frac{\nu}{2}(t+\beta W(t))} \cos \omega (t+\beta W(t)) - \frac{1}{\omega}(q_0+\frac{\nu}{2} p_0) e^{-\frac{\nu}{2}(t+\beta W(t))} \sin \omega (t+\beta W(t)),
\end{align}

\noindent
where $q_0$ and $p_0$ are the initial conditions, the angular frequency is $\omega=\frac{1}{2}\sqrt{4-\nu^2}$, and we have assumed the underdamped case $0\leq \nu < 2$. Note that \eqref{eq:Damped Kubo oscillator---exact solution} is the solution of the deterministic damped harmonic oscillator with the time argument shifted by $\beta W(t)$. Given that $W(t)\sim N(0,t)$ is normally distributed, one can explicitly calculate the expected value of the Hamiltonian $H$ as a function of time as

\begin{align}
\label{eq:The expected value of the Hamiltonian for the Kubo oscillator}
E\Big(H\big(q_e(t),p_e(t)\big)\Big)=a e^{-\frac{\nu (2-\beta^2 \nu)}{2}t} + e^{-((2-\nu^2)\beta^2+\nu)t}\Big[ b \cos\big(2 (1-\beta^2 \nu)\omega t \big) + c \sin\big(2 (1-\beta^2 \nu)\omega t \big) \Big],
\end{align}

\noindent
where

\begin{align}
\label{eq:Coefficients in the formula for E(H)}
a=\frac{2 (p_0^2 + q_0^2 + \nu p_0 q_0)}{4 - \nu^2}, \qquad\qquad b=-\frac{\nu^2 (p_0^2 + q_0^2)+4 \nu p_0 q_0}{2 (4 - \nu^2)}, \qquad\qquad c=\frac{\nu (q_0^2-p_0^2)}{2 \sqrt{4 - \nu^2}}.
\end{align}

\subsubsection{Empirical data}
\label{sec: Empirical data for forced Kubo}

Suppose that, similar to Section~\ref{sec: Empirical data for Kubo}, we have the following computational problem: a solution (or its statistical properties) to \eqref{eq: Stochastic dissipative Hamiltonian system} is needed for a large number of parameters $\beta$ and $\nu$. In order to efficiently calculate, e.g., the expected value of the solution, we use full-model data for a selected number of values of $\beta$ and $\nu$ to construct a reduced model which is less expensive to solve. In our experiment we have considered the initial conditions $q_0=2$ and $p_0=0$, and we have used the exact solution \eqref{eq:Damped Kubo oscillator---exact solution} to generate the empirical data for $\beta=0.001$ and for the following six (arbitrarily selected) values of the damping coefficient $\nu$:

\begin{equation}
\label{eq: nu values for forced Kubo}
\nu = \; 0.00097, \; 0.00099, \; 0.00101, \; 0.00103, \; 0.00105, \; 0.00107.
\end{equation}

\begin{figure}[tbp]
	\centering
		\includegraphics[width=.8\textwidth]{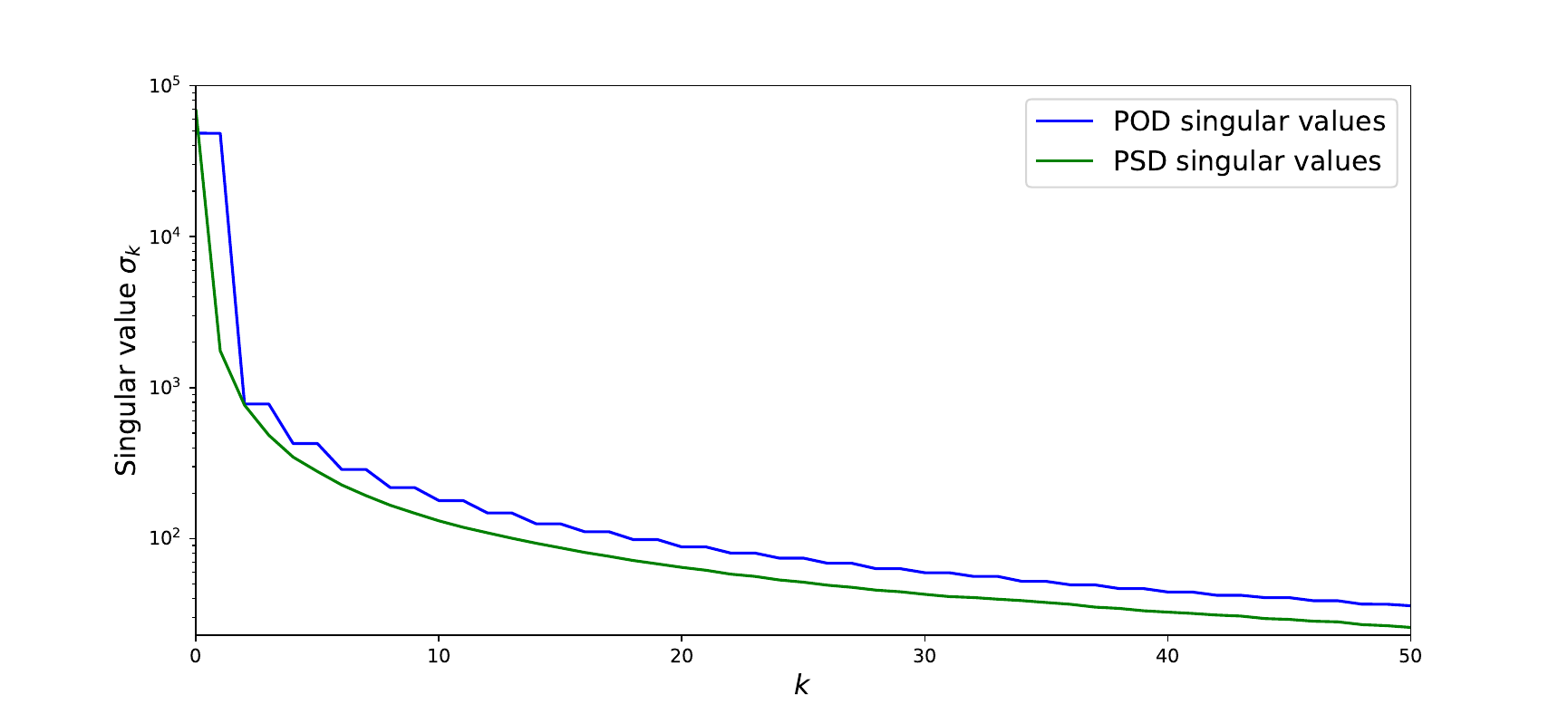}
		\caption{ The decay of the singular values for the POD and PSD reductions for the empirical data ensemble for the forced Kubo oscillator. Because of their large number and rapid decay, for clarity only the first 50 singular values are depicted. }
		\label{fig: Singular values for forced Kubo}
\end{figure}

\noindent
The exact solution \eqref{eq:Damped Kubo oscillator---exact solution} was sampled over the time interval $0\leq t \leq 5000$ with the time step $\Delta t = 0.05$ for $M=10000$ independent realizations of the Wiener process, $W(t;\omega_1),\ldots,W(t;\omega_M)$, where $\omega_\nu$ denote elementary events in the probability space. Then, following the description of each algorithm in Section~\ref{sec: Reduction of the number of Monte Carlo runs}, reduced models were derived. The single realization of the $M$-dimensional Wiener process in \eqref{eq: System of SDEs for Monte Carlo} and \eqref{eq: System of stochastic Hamiltonian systems for Monte Carlo} was defined as $W^\nu(t)=W(t;\omega_\nu)$ for $\nu=1,\ldots,M$. Note that for the forced Kubo oscillator with the linear forcing terms, the drift and diffusion terms are linear, therefore no DEIM approximations were necessary. The decay of the singular values for the POD and PSD methods is depicted in Figure~\ref{fig: Singular values for forced Kubo}.

\subsubsection{Reduced model simulations}
\label{sec: Reduced model simulations for forced Kubo}

The reduced models obtained in Section~\ref{sec: Empirical data for forced Kubo} can now be solved for any other desired value of the parameter $\nu$. To test the accuracy of the considered model reduction methods, we have compared the results of the reduced model simulations to a full-scale reference solution for a single arbitrary choice of the parameter. The reference solution for $\beta=0.001$ and $\nu=0.001$ was generated on the time interval $0 \leq t \leq 5000$ in the same way as the empirical data in Section~\ref{sec: Empirical data for forced Kubo}. Both the POD and PSD reduced models were integrated using the stochastic St\"{o}rmer-Verlet method. Note that in the case of the forced Kubo oscillator, the St\"{o}rmer-Verlet method is implicit for both the POD and PSD models. As explained in Section~\ref{sec: PSD time integration}, when applied to the PSD reduced model, the St\"{o}rmer-Verlet method is a structure-preserving Lagrange-d'Alembert integrator, whereas it does not have this property in the case of the POD reduced model. The POD reduced model was solved for $k=42$ (thus reducing the dimensionality of the full system from $2M=20000$ to 42). The PSD reduced model was solved for $k=21$ (thus reducing the dimensionality of the full system from $2M=20000$ to $2k=42$).

\begin{figure}[tbp]
	\centering
		\includegraphics[width=\textwidth]{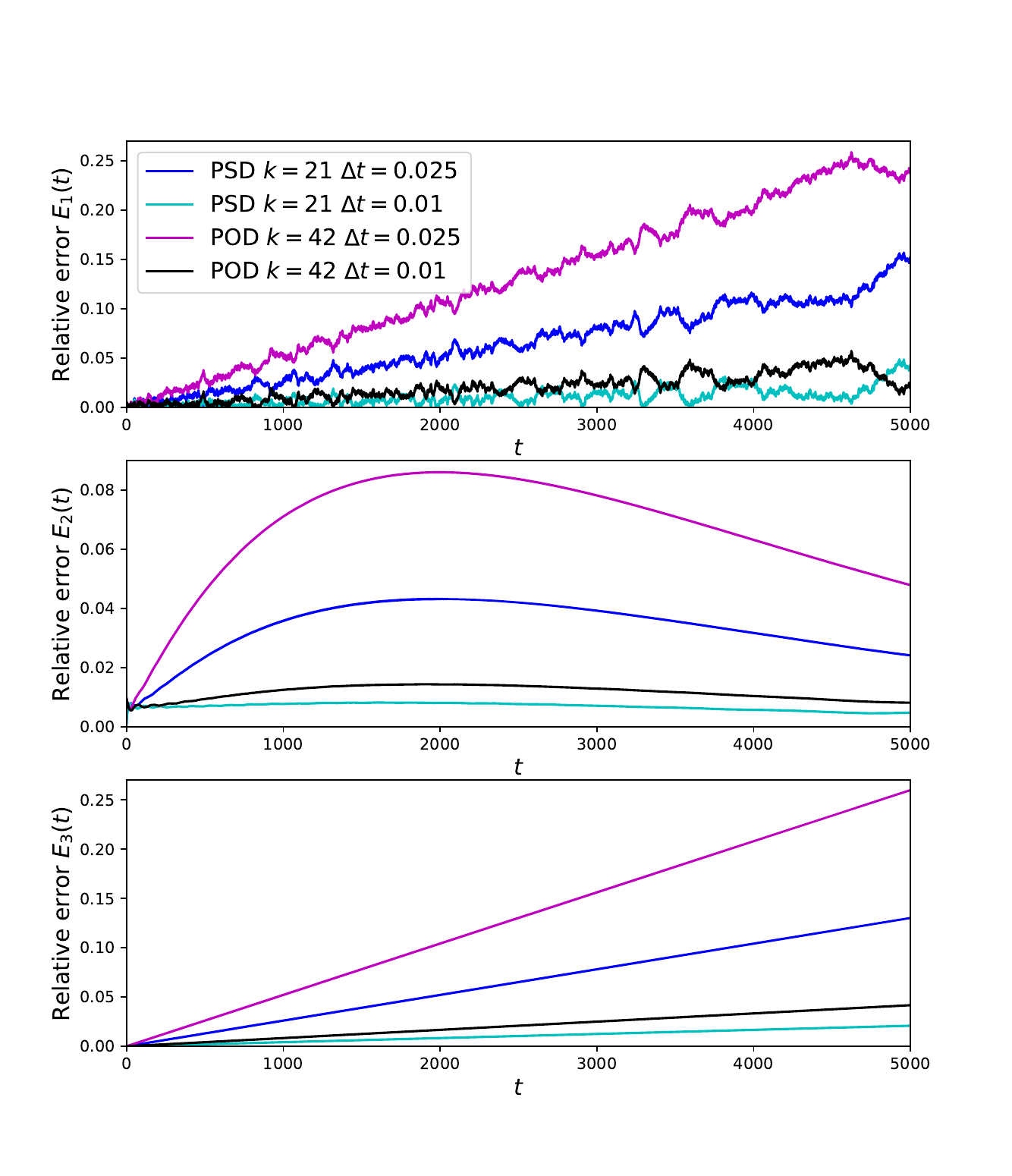} 
		\caption{ The relative errors $E_1(t)$ (\emph{Top}), $E_2(t)$ (\emph{Middle}), and $E_3(t)$ (\emph{Bottom}) for the reduced model simulations of the forced Kubo oscillator using the stochastic St\"{o}rmer-Verlet scheme are depicted as functions of time. The PSD method yields more accurate solutions than the POD method using the same time step.}
		\label{fig: Solution errors for forced Kubo}
\end{figure}

\begin{figure}[tbp]
	\centering
		\includegraphics[width=0.82\textwidth]{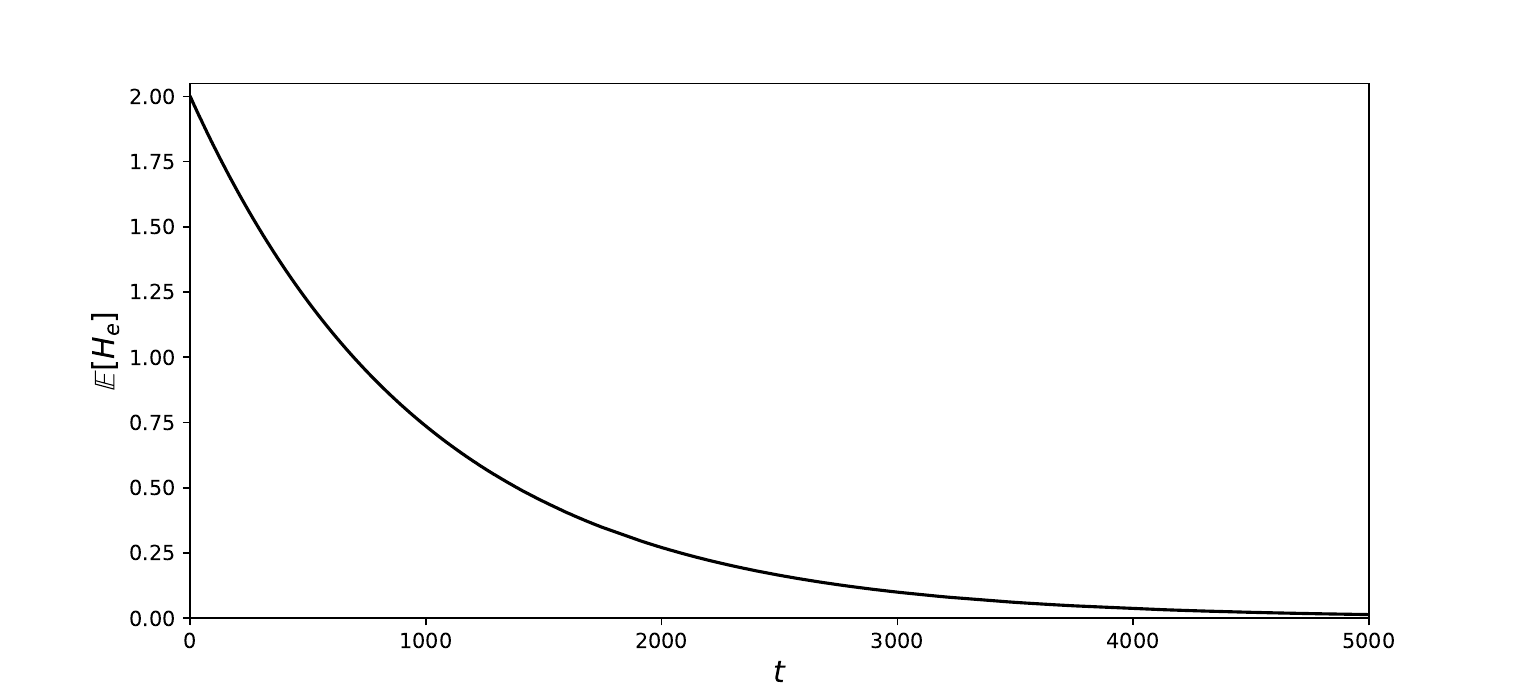} 
		\caption{ The time dependence of the expected value of the Hamiltonian $\mathbb{E}[H_e]$ for the exact solution~\eqref{eq:Damped Kubo oscillator---exact solution} of the forced Kubo oscillator with the initial conditions $q_0=2$ and $p_0=0$, and the parameters $\beta=0.001$ and $\nu=0.001$.}
		\label{fig: Exact energy for forced Kubo}
\end{figure}

\begin{figure}[tbp]
	\centering
		\includegraphics[width=0.85\textwidth]{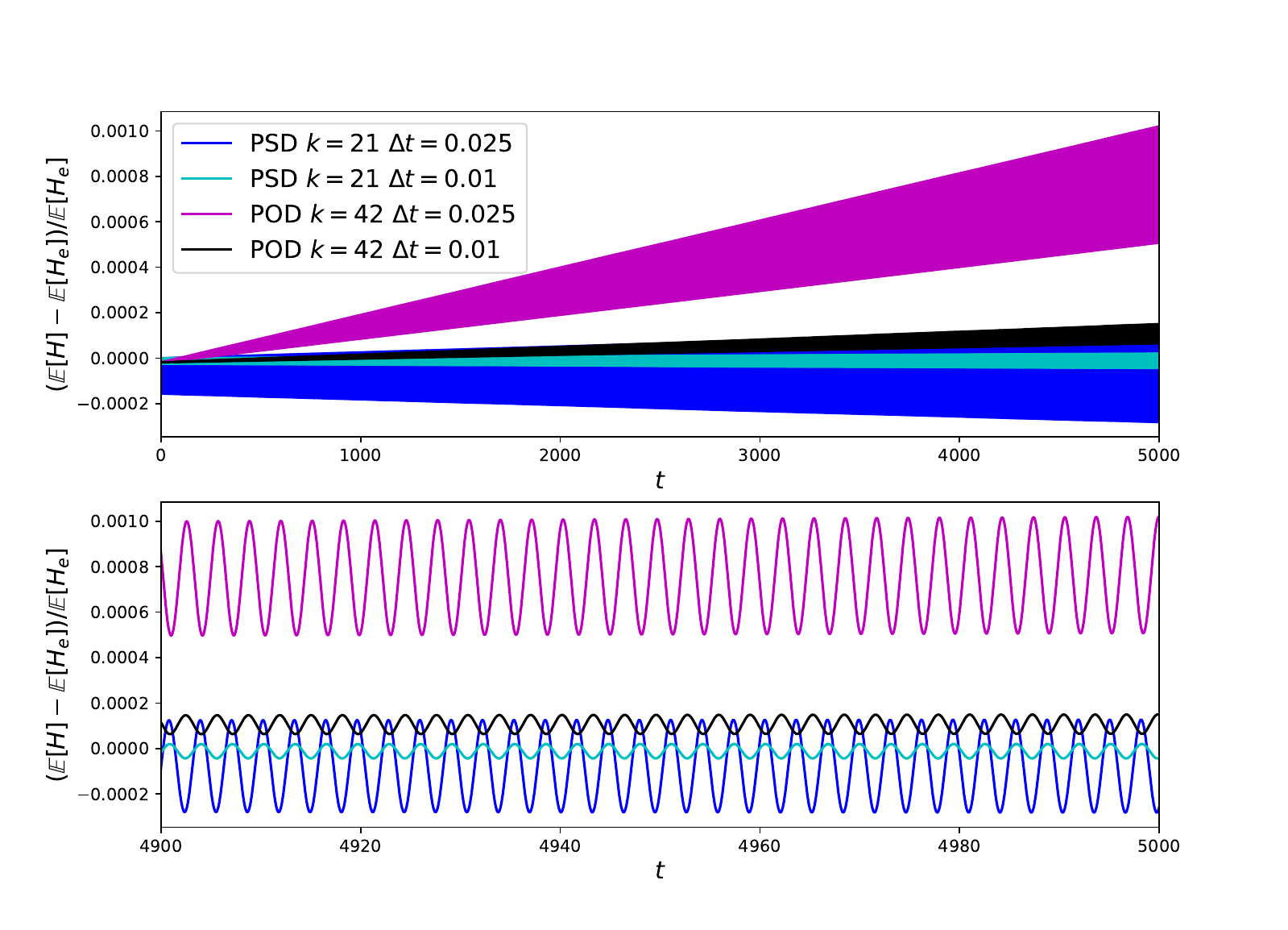} 
		\caption{ \emph{Top:} The relative error of the mean Hamiltonian $\mathbb{E}[H]$ for the reduced model simulations of the forced Kubo oscillator using the stochastic St\"{o}rmer-Verlet method is depicted as a function of time over the whole integration interval, where $H_e=H(q_e(t),p_e(t))$ denotes the value of the Hamiltonian for the exact solution~\eqref{eq:Damped Kubo oscillator---exact solution}. It is evident that the PSD simulations capture the evolution of Hamiltonian more accurately than the POD simulations with the same time step. \emph{Bottom:} The same plot zoomed in on the time interval [4900,5000] for a more detailed depiction of the oscillatory character of the time dependence of the error. }
		\label{fig: Energy errors for forced Kubo}
\end{figure}

The errors $E_1(t)$, $E_2(t)$, and $E_3(t)$ (see \eqref{eq: Relative error for a single path}, \eqref{eq: Relative mean-square error}, and \eqref{eq: Relative weak error}, respectively) are depicted in Figure~\ref{fig: Solution errors for forced Kubo}. We can see that the reduced model simulations indeed yield good approximations of the exact solution. We also see that the PSD method yields more accurate solutions than the POD method with the same time step. The theoretical time evolution \eqref{eq:The expected value of the Hamiltonian for the Kubo oscillator} of the mean Hamiltonian $\mathbb{E}[H_e]$ for the chosen initial conditions and parameters is depicted in Figure~\ref{fig: Exact energy for forced Kubo}, whereas the errors of the corresponding numerical values $\mathbb{E}[H]$ are depicted in Figure~\ref{fig: Energy errors for forced Kubo}. It is evident that the PSD simulations capture the evolution of the Hamiltonian more accurately than the POD simulations with the same time step.

\section{Summary}
\label{sec: Summary}

We have successfully demonstrated that SVD-based model reduction methods known for ordinary differential equations can be extended to stochastic differential equations, and can be used to reduce the computational cost arising from both the high dimension of the considered stochastic system and the large number of independent Monte Carlo runs. We have also argued that in model reduction of stochastic Hamiltonian systems it is advisable to maintain their symplectic or variational structures, to which end we have adapted to the stochastic setting the proper symplectic decomposition method known for deterministic Hamiltonian systems. We have further applied our proposed techniques to a semi-discretization of the stochastic Nonlinear Schr\"{o}dinger equation and to the Kubo oscillator, providing numerical evidence that model reduction is a viable tool for obtaining accurate numerical approximations of stochastic systems, and that preserving the geometric structures of stochastic Hamiltonian systems results in more accurate and stable solutions that conserve energy better than when the non-geometric approach is used.

Our work can be extended in many ways. A natural follow-up study would be the application of the presented model reduction techniques to semi-discretizations of other stochastic PDEs with underlying geometric structures, such as the stochastic Camassa-Holm equation (\cite{BendallCotterHolm2021}, \cite{HolmTyranowskiSolitons}, \cite{HolmTyranowskiVirasoro}), the stochastic quasi-geostrophic equation (\cite{BendallCotter2019}, \cite{CotterCrisanHolm2020}, \cite{CotterHolm2020}, \cite{CotterHolm2019}, \cite{CotterHolm2017}), the stochastic Korteweg-de Vries equation (\cite{ChenWang2005}, \cite{BouardDebussche1998}, \cite{HermanRose2009}, \cite{HolmTyranowskiVirasoro}), or the stochastic Sine-Gordon equation (\cite{HairerShen2016}, \cite{Tuckwell2016}, \cite{TyranowskiDesbrunRAMVI}, \cite{WangChen2021}) to name just a few. Particle discretizations of collisional Vlasov equations have been recently proved to have the structure of stochastic forced Hamiltonian systems (\cite{KrausTyranowski2019}, \cite{TyranowskiVlasovMaxwell}, \cite{TyranowskiKraus2021}), therefore kinetic plasma theory is yet another area where structure-preserving model reduction techniques could be applied. Furthermore, it would be interesting to extend the methods discussed in our work to stochastic non-canonical systems. For instance, one could consider structure-preserving model reduction techniques for stochastic Poisson systems by combining the method developed in \cite{Hesthaven2020} with the stochastic geometric integrators presented in \cite{Cohen2014} and \cite{CohenVilmart2021}. This would be of great interest for systems appearing in gyrokinetic and guiding-center theories (see \cite{Balescu1994}, \cite{BottinoSonnendrucker2015}, \cite{Brizard2000}, \cite{BrizardTronci2016}, \cite{CaryBrizard2009}, \cite{Pfirsch1984}, \cite{PfirschMorrison1985}, \cite{SugamaGyrokinetic2000}, \cite{TyranowskiDesbrunLinearLagrangians}, \cite{Eijnden1998}).

\section*{Acknowledgements}

We would like to thank Tobias Blickhan, Michael Kraus, Paul Skerritt and Udo von Toussaint for useful comments and references. The study is a contribution to the Reduced Complexity Models grant number ZT-I-0010 funded by the Helmholtz Association of German Research Centers.

\bibliographystyle{abbrv}

\end{document}